\newtheorem{thm}{Theorem}
\newtheorem{prop}{Proposition}
\newtheorem{defi}{Definition}
\newtheorem{lem}{Lemma}
\theoremstyle{remark}
\newtheorem{remark}{Remark}
\newcommand{\R}{\mathbb{R}}
\newcommand{\vct}[1]{\boldsymbol{#1}}
\newcommand{\mtx}[1]{\boldsymbol{#1}}
\newcommand{\T}{\mathrm{T}}
\newcommand{\rank}{\operatorname{rank}}
\def \lg        {\langle}
\def \rg        {\rangle}
\def \vec       {\operatorname*{vec}}
\newcommand{\set}[1]{\mathcal{#1}}
\DeclareMathOperator*{\minimize}{\text{minimize}}
\DeclareMathOperator*{\argmin}{\text{arg~min}}
\def \calC {\set{C}}
\newcommand{\vq}{\vct{q}}
\newcommand{\vx}{\vct{x}}
\newcommand{\vy}{\vct{y}}
\newcommand{\vz}{\vct{z}}
\newcommand{\vdelta}{\vct{\delta}}
\newcommand{\vzero}{\vct{0}}
\newcommand{\mD}{\mtx{D}}
\newcommand{\mG}{\mtx{G}}
\newcommand{\mP}{\mtx{P}}
\newcommand{\mQ}{\mtx{Q}}
\newcommand{\mU}{\mtx{U}}
\newcommand{\mV}{\mtx{V}}
\newcommand{\mW}{\mtx{W}}
\newcommand{\mX}{\mtx{X}}
\newcommand{\mY}{\mtx{Y}}
\newcommand{\mDelta}{\mtx{\Delta}}
\newcommand{\mSigma}{\mtx{\Sigma}}
\newcommand{\mId}{{\bf I}}
\newcommand{\mzero}{{\bf 0}}
\newcommand{\setC}{\set{C}}
\newlength{\imgwidth}
\def \lg        {\langle}
\def \rg        {\rangle}
\title{Global Optimality in Distributed Low-rank Matrix Factorization}
\author{Zhihui Zhu, Qiuwei Li, Xinshuo Yang, Gongguo Tang, and Michael B. Wakin\footnote{The first two authors contributed equally to this work.} \\[3mm]  \normalsize Department of Electrical Engineering, Colorado School of Mines}
\begin{document}

\maketitle

\begin{abstract}
We study the convergence of a variant of distributed gradient descent (DGD) on a distributed low-rank matrix approximation problem wherein some optimization variables are used for consensus (as in classical DGD) and some optimization variables appear only locally at a single node in the network. We term the resulting algorithm DGD+LOCAL. Using algorithmic connections to gradient descent and geometric connections to the well-behaved landscape of the centralized low-rank matrix approximation problem, we identify sufficient conditions where DGD+LOCAL is guaranteed to converge with exact consensus to a global minimizer of the original centralized problem. For the distributed low-rank matrix approximation problem, these guarantees are stronger---in terms of consensus and optimality---than what appear in the literature for classical DGD and more general problems.
\end{abstract}

\section{Introduction}

A promising line of recent literature has examined the nonconvex objective functions that arise when certain matrix optimization problems are solved in factored form, that is, when a low-rank optimization variable $\mX$ is replaced by a product of two thin matrices $\mU \mV^\T$ and the optimization proceeds jointly over $\mU$ and $\mV$~\cite{zhu2018global,zhu2017global,Chi2018,li2018non,ChenChi2018Harnessing,GeEtAl2017No,SunLuo2016Guaranteed}. In many cases, a study of the geometric landscape of these objective functions reveals that---despite their nonconvexity---they possess a certain favorable geometry. In particular, many of the resulting objective functions ($i$) satisfy the {\em strict saddle property}~\cite{GeEtAl2015Escaping,SunEtAl2015When}, where every critical point is either a local minimum or is a strict saddle point, at which the Hessian matrix has at least one negative eigenvalue, and ($ii$) have no spurious local minima (every local minimum corresponds to a global minimum).

One such problem---which is both of fundamental importance and representative of structures that arise in many other machine learning problems~\cite{koren2009matrix}---is the low-rank matrix approximation problem, where given a data matrix $\mY$ the objective is to minimize $\| \mU \mV^\T - \mY \|_F^2$. As we explain in Theorem~\ref{thm:centerss}, building on recent analysis in~\cite{nouiehed2018learning} and~\cite{zhu2017global}, this problem satisfies the strict saddle property and has no spurious local minima.

In parallel with the recent focus on the favorable geometry of certain nonconvex landscapes, it has been shown that a number of local search algorithms have the capability to avoid strict saddle points and converge to a local minimizer for problems that satisfy the strict saddle property~\cite{lee2016gradient,lee2017first,jin2017escape,royer2018newton}. As stated in~\cite{lee2017first} and as we summarize in Theorems~\ref{thm:avoid saddle gd} and~\ref{thm:boundedgd}, gradient descent when started from a random initialization is one such algorithm. For problems such as low-rank matrix approximation that have no spurious local minima, converging to a local minimizer means converging to a global minimizer.

To date, the geometric and algorithmic research described above has largely focused on {\em centralized optimization}, where all computations happen at one ``central'' node that has full access, for example, to the data matrix $\mY$.

In this work, we study the impact of {\em distributing} the factored optimization problem, such as would be necessary if the data matrix $\mY$ in low-rank matrix approximation were partitioned into submatrices $\mY = \begin{bmatrix} \mY_1 & \mY_2 & \cdots & \mY_J \end{bmatrix}$, each of which was available at only one node in a network. By similarly partitioning the matrix $\mV$, one can partition the objective function
\begin{equation}
\| \mU \mV^\T - \mY \|_F^2 = \sum_{j=1}^J \| \mU\mV_j^\T - \mY_j \|_F^2.
\label{eq:pca1}
\end{equation}
As we discuss, one can attempt to minimize the resulting objective, in which the matrix $\mU$ appears in every term of the summation, using techniques similar to classical distributed algorithms such as distributed gradient descent (DGD)~\cite{nedic2009distributed}. These algorithms, however, involve creating local copies $\mU^1, \mU^2, \dots, \mU^J$ of the optimization variable $\mU$ and iteratively sharing updates of these variables with the aim of converging to a consensus where (exactly or approximately) $\mU^1 = \mU^2 = \cdots = \mU^J$.

In this paper we study a straightforward extension of DGD for solving such problems. This extension, which we term DGD+LOCAL, resembles classical DGD in that each node $j$ has a local {\em copy} $\mU^j$ of the optimization variable $\mU$ as described above. Additionally, however, each node has a local {\em block} $\mV_j$ of the partitioned optimization variable $\mV$, and this block exists only locally at node $j$ without any consensus or sharing among other nodes.

We present a geometric framework for analyzing the convergence of DGD+LOCAL in such problems. Our framework relies on a straightforward conversion which reveals (for example in the low-rank matrix approximation problem) that DGD+LOCAL as described above is equivalent to running conventional gradient descent on the objective function
\begin{equation}
\sum_{j=1}^J \left(\| \mU^j \mV_j^\T - \mY_j \|_F^2 + \sum_{i=1}^J w_{ji} \|\mU^j - \mU^i\|_F^2\right),
\label{eq:pca2}
\end{equation}
where $w_{ji}$ are weights inherited from the DGD+LOCAL iterations. This objective function~\eqref{eq:pca2} differs from the original objective function~\eqref{eq:pca1} in two respects: it contains more optimization variables, and it includes a quadratic regularizer to encourage consensus. Although the geometry of~\eqref{eq:pca1} is understood to be well-behaved, new questions arise about the geometry of~\eqref{eq:pca2}: Does it contain new critical points (local minima that are not global, saddle points that are not strict)? And on the consensus subspace, where $\mU^1 = \mU^2 = \cdots = \mU^J$, how do the critical points of~\eqref{eq:pca2} relate to the critical points of~\eqref{eq:pca1}? We answer these questions and build on the algorithmic results for gradient descent to identify in Theorem~\ref{thm:mainRevisedfj} sufficient conditions where DGD+LOCAL is guaranteed to converge to a point that ($i$) is exactly on the consensus subspace, and ($ii$) coincides with a global minimizer of problem~\eqref{eq:pca1}. Under these conditions, the distributed low-rank matrix approximation problem is shown to enjoy the same geometric and algorithmic guarantees as its well-behaved centralized counterpart.

For the distributed low-rank matrix approximation problem, these guarantees are stronger than what appear in the literature for classical DGD and more general problems. In particular, we show exact convergence to the consensus subspace with a fixed DGD+LOCAL stepsize, which in more general works is accomplished only with diminishing DGD stepsizes for convex~\cite{chen2012fast,jakovetic2014fast} and nonconvex~\cite{zeng2018nonconvex} problems or by otherwise modifying DGD as in the EXTRA algorithm~\cite{shi2015extra}. Moreover, we show convergence to a global minimizer of the original centralized nonconvex problem. Until recently, existing DGD results either considered convex problems~\cite{chen2012fast,jakovetic2014fast} or showed convergence to stationary points of nonconvex problems~\cite{zeng2018nonconvex}. Very recently, it was also shown~\cite{daneshmand2018second} that with an appropriately small stepsize, DGD can converge to an arbitrarily small neighborhood of a second-order critical point for general nonconvex problems with additional technical assumptions. Our work differs from~\cite{daneshmand2018second} in our use of DGD+LOCAL (rather than DGD) and our focus on one specific problem where we can establish stronger guarantees of exact global optimality and exact consensus without requiring an arbitrarily small (or diminishing) stepsize.

Our main results on distributed low-rank matrix factorization are presented in Section~\ref{sec:theorypca}. These results build on several more general algorithmic and geometric results that we first establish in Section~\ref{sec:theoryunc}. The results from Section~\ref{sec:theoryunc} may have broader applicability, and the geometric and algorithmic discussions in Section~\ref{sec:theoryunc} may have independent interest from one another.

\section{General Analysis of DGD+LOCAL}
\label{sec:theoryunc}

Consider a centralized minimization problem that can be written in the form
\begin{align}
\minimize_{\vx, \vy} f(\vx,\vy) = \sum_{j=1}^J f_j(\vx,\vy_j),
\label{eq:centeralized problem}\end{align}
where $\vy = \begin{bmatrix}\vy_1^\T & \cdots & \vy_J^\T\end{bmatrix}^\T$. Here $\vx$ is the common variable in all of the objective functions $\{f_j\}_{j\in[J]}$ and $\vy_j$ is the variable only corresponding to $f_j$.

The standard DGD algorithm~\cite{nedic2009distributed} is stated for problems of the form
\begin{align*}
\minimize_{\vx} f(\vx) = \sum_{j=1}^J f_j(\vx),
\end{align*}
and for such problems it involves updates of the form
\begin{align*}
\vx^j(k+1) &= \sum_{i=1}^J \left(\widetilde w_{ji}\vx^i(k)\right) - \mu  \nabla_{\vx} f_j(\vx^j (k)),
\end{align*}
where $\{\widetilde w_{ji}\}$ are a set of  symmetric  nonnegative weights, and $\widetilde w_{ji}$ is positive if and only if nodes $i$ and $j$ are neighbors in the network or $i=j$. Throughout this paper, we will make the common assumption~\cite{mokhtari2017network} that
\begin{align}
\sum_{i =1}^J \widetilde w_{ji} = 1 ~\text{for all}~ j\in[J].
\label{eq:sumto1}
\end{align}

A very natural extension of DGD to problems of the form~\eqref{eq:centeralized problem}---which involve local {\em copies} of the shared variable $\vx$ and local {\em partitions} of the variable $\vy$---is to perform the updates
\begin{align}
\vx^j(k+1) &= \sum_{i=1}^J \left(\widetilde w_{ji}\vx^i(k)\right) - \mu  \nabla_{\vx} f_j(\vx^j (k),\vy_j(k)), \nonumber \\
\vy_j(k+1) & = \vy_j(k) - \mu \nabla_{\vy}f_j(\vx^j(k),\vy_j(k)).
\label{eq:DGDtemplate}
\end{align}
Because we are interested in solving problems of the form~\eqref{eq:centeralized problem}, we refer to~\eqref{eq:DGDtemplate} as DGD+LOCAL throughout this paper. We note that DGD+LOCAL is not equivalent to algorithm would obtain by applying classical DGD to reach consensus over the concatenated variables $\vx$ and $\vy$ as this would require each node to maintain a local copy of the entire vector $\vy$. For the same reason, DGD+LOCAL is not equivalent to the blocked variable problem described in~\cite{notarnicola2017distributed}.

\subsection{Relation to Gradient Descent}
\label{sec:dgdgd}

Note that we can rewrite the first equation in~\eqref{eq:DGDtemplate} as
\begin{align*}
\vx^j(k+1) &= (\sum_{i =1}^J \widetilde w_{ji})\vx^j(k) - \mu \left( \nabla_{\vx} f_j(\vx^j (k),\vy_j(k))  + \sum_{i\neq j} \frac{\widetilde w_{ji}}{\mu}(\vx^j(k) - \vx^i(k))   \right) \\
&= \vx^j(k) - \mu \left( \nabla_{\vx} f_j(\vx^j (k),\vy_j(k))  + \sum_{i\neq j} \frac{\widetilde w_{ji}}{\mu}(\vx^j(k) - \vx^i(k))   \right).
\end{align*}
In the second line, we have used the assumption~\eqref{eq:sumto1}. Thus,  by defining $\{w_{ji}\}$ such that
\begin{equation}
w_{ji}=w_{ij} = \begin{cases} \frac{\widetilde w_{ji}}{4\mu}, & i \neq j, \\ 0, & i=j, \end{cases}
\label{eq:wtildetow}
\end{equation}
we see that DGD+LOCAL~\eqref{eq:DGDtemplate} is equivalent to applying standard gradient descent (with stepsize $\mu$) to the problem
\begin{equation}\begin{split}
&\minimize_{\vz} g(\vz) = \sum_{j=1}^J \left(f_j(\vx^j,\vy_j) + \sum_{i=1}^J w_{ji} \|\vx^j - \vx^i\|_2^2\right),
\end{split}\label{eq:DGD problem}\end{equation}
where $\vz = (\vx^1,\ldots,\vx^J,\vy_1,\ldots,\vy_J)$ and $\mW = \{w_{ji}\}$ is a $J \times J$ connectivity matrix with nonnegative entries defined in~\eqref{eq:wtildetow} and zeros on the diagonal.

\subsection{Algorithmic Analysis}
\label{sec:dgdalg}

We are interested in understanding the convergence of the gradient descent algorithm when it is applied to minimizing $g(\vz)$ in~\eqref{eq:DGD problem}; as we have argued in Section~\ref{sec:dgdgd}, this is equivalent to running the DGD+LOCAL algorithm~\eqref{eq:DGDtemplate} to minimize the objective function $f(\vx,\vy)$ in~\eqref{eq:centeralized problem}.

Under certain conditions, we can guarantee that gradient descent will converge to a second-order critical point of the objective function $g(\vz)$ in~\eqref{eq:DGD problem}. The proof relies on certain properties of the functions $f_j$ comprising~\eqref{eq:centeralized problem}. We first describe these properties before providing the convergence result.

\subsubsection{Objective Function Properties and Convergence of Gradient Descent}

The first property concerns the assumption that each $f_j$ comprising~\eqref{eq:centeralized problem} has Lipschitz gradient. In this case we can also argue that $g$ in~\eqref{eq:DGD problem} has Lipschitz gradient.

\begin{prop}
Let $f(\vx,\vy) = \sum_{j=1}^J f_j(\vx,\vy_j)$ be an objective function as in~\eqref{eq:centeralized problem} and let $g(\vz)$ be as in \eqref{eq:DGD problem} with $\vz = (\vx^1,\ldots,\vx^J,\vy_1,\ldots,\vy_J)$. Suppose that each $f_j$ has Lipschitz gradient, i.e., $\nabla f_j$ is Lipschitz continuous with constant $L_j>0$. Then $\nabla g$ is Lipschitz continuous with constant
\[
L_{g} = L + \frac{2\omega}{\mu},
\]
where $L := \max_j L_j$,  $ \omega :=\sum_{i\neq j }^J  \widetilde w_{ji}$, and $\widetilde w_{ji}$ and $\mu$ are the DGD+LOCAL weights and stepsize as in~\eqref{eq:DGDtemplate}.
\label{prop:lip}
\end{prop}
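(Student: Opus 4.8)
The plan is to split $g$ into its separable data-fidelity part and its consensus-regularization part, bound the Lipschitz constant of each gradient separately, and then combine through the triangle inequality. Write $g(\vz) = F(\vz) + Q(\vz)$, where $F(\vz) = \sum_{j=1}^J f_j(\vx^j,\vy_j)$ collects the local objectives and $Q(\vz) = \sum_{j=1}^J \sum_{i=1}^J w_{ji}\|\vx^j-\vx^i\|_2^2$ is the quadratic consensus penalty. Since $\nabla g = \nabla F + \nabla Q$, once I show that $\nabla F$ is $L$-Lipschitz and $\nabla Q$ is $\tfrac{2\omega}{\mu}$-Lipschitz, the triangle inequality immediately gives $\|\nabla g(\vz)-\nabla g(\vz')\| \le (L + \tfrac{2\omega}{\mu})\|\vz-\vz'\|$, which is the claim.

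The separable part is routine. The function $F$ depends on the block $(\vx^j,\vy_j)$ only through $f_j$, and for distinct $j$ these blocks are \emph{disjoint} coordinates of $\vz$. Hence $\nabla F(\vz)$ is simply the stacking of the vectors $\nabla f_j(\vx^j,\vy_j)$. Squaring and summing the blockwise bounds $\|\nabla f_j(\vx^j,\vy_j)-\nabla f_j(\vx'^j,\vy'_j)\| \le L_j\|(\vx^j,\vy_j)-(\vx'^j,\vy'_j)\|$, and using $L_j \le L := \max_j L_j$, yields $\|\nabla F(\vz)-\nabla F(\vz')\|^2 \le L^2\sum_j\|(\vx^j,\vy_j)-(\vx'^j,\vy'_j)\|^2 = L^2\|\vz-\vz'\|^2$, so $\nabla F$ is $L$-Lipschitz.

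For the consensus part, $Q$ is a quadratic form in the $\vx$-variables alone (and constant in $\vy$), so $\nabla Q$ is affine and its Lipschitz constant equals the spectral norm of its Hessian. Differentiating and using the symmetry $w_{ji}=w_{ij}$ with $w_{jj}=0$ gives $\nabla_{\vx^j}Q = 4\sum_i w_{ji}(\vx^j-\vx^i)$, so the $\vx$-block of the Hessian is $4(\mL\otimes\mId)$, where $\mL = \diag\!\big(\sum_i w_{ji}\big) - \mW$ is the graph Laplacian of the weight matrix $\mW$, and the $\vy$-block is zero. Since $\|\mL\otimes\mId\| = \|\mL\|$, the Lipschitz constant of $\nabla Q$ equals $4\|\mL\|$.

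The main work is bounding $\|\mL\|$ so that the constant matches $\tfrac{2\omega}{\mu}$ exactly rather than producing a looser factor. I would use the quadratic-form identity $\vv^\T\mL\vv = \tfrac12\sum_{j,i}w_{ji}(v_j-v_i)^2$, together with $(v_j-v_i)^2 \le 2(v_j^2+v_i^2)$ and the symmetry of $\mW$, to get $\vv^\T\mL\vv \le 2\sum_j d_j v_j^2 \le 2 d_{\max}\|\vv\|^2$ with $d_j := \sum_i w_{ji}$; since $\mL \succeq 0$ this yields $\|\mL\| \le 2 d_{\max}$ (equivalently, a Gershgorin estimate). Substituting $w_{ji}=\widetilde w_{ji}/(4\mu)$ from~\eqref{eq:wtildetow} gives $d_{\max} = \max_j \sum_{i\neq j} w_{ji} = \tfrac1{4\mu}\max_j\sum_{i\neq j}\widetilde w_{ji} = \tfrac{\omega}{4\mu}$, where by~\eqref{eq:sumto1} each row-sum is $1-\widetilde w_{jj}$. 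Hence $4\|\mL\| \le 4\cdot 2\cdot\tfrac{\omega}{4\mu} = \tfrac{2\omega}{\mu}$, and adding the $L$-bound on $\nabla F$ completes the proof. The only delicate point is ensuring the factor-of-two Laplacian degree bound is tight enough to reproduce $2\omega/\mu$; the quadratic-form estimate above is exactly what is needed.
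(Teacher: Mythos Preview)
Your proof is correct and follows essentially the same strategy as the paper: split $g$ into the separable data term (handled blockwise via the $L_j$'s) and the quadratic consensus term, then bound the latter's Lipschitz constant by $8\,d_{\max}=2\omega/\mu$. The only cosmetic difference is that you package the consensus bound as $\|\nabla^2 Q\|=4\|\mL\|\le 8d_{\max}$ via the graph Laplacian, whereas the paper reaches the identical constant by writing the consensus gradient increment as a diagonal piece plus a $\mW$-piece and bounding each by $4d_{\max}\|\vdelta\|$ (i.e., $\|\mL\|\le\|\mD\|+\|\mW\|\le 2d_{\max}$ made explicit).
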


Proposition~\ref{prop:lip} is proved in Appendix~\ref{sec:prooflip}.

The second property concerns the following {\L}ojasiewicz inequality, which arises in the convergence analysis of gradient descent.
\begin{defi}\cite{attouch2009convergence}\label{def:KL}
Assume that $h:\R^n \rightarrow \R$ is continuously differentiable. Then $h$ is said to satisfy the {\L}ojasiewicz inequality, if for any critical point $\overline{\vx}$ of $h(\vx)$, there exist $\delta>0,~\theta\in[0,1),~C_1>0$ such that
	\[
	\left|h(\vx) - h(\overline{\vx})\right|^{\theta} \leq C_1 \|\nabla h(\vx)\|,~~\forall~\vx\in B(\overline{\vx}, \delta).
	\]
	Here $\theta$ is often referred to as the KL exponent.
\end{defi}
This {\L}ojasiewicz inequality (or a more general Kurdyka-{\L}ojasiewicz (KL) inequality for the general nonsmooth problems) characterizes the local geometric properties of the objective function around its critical points and has proved useful for convergence analysis \cite{attouch2009convergence,bolte2014proximal}. The {\L}ojasiewicz inequality (or KL inequality) is very general and holds for most problems in engineering. For example, every analytic function satisfies this {\L}ojasiewicz inequality, but each function may have different {\L}ojasiewicz exponent $\theta$ which determines the convergence rate; see \cite{attouch2009convergence,bolte2014proximal} for the details on this.

A general result for convergence of gradient descent to first-order critical point for a function satisfying the {\L}ojasiewicz inequality is as follows.\footnote{The result in~\cite{attouch2009convergence} is stated for the proximal method, but the result can be extended to gradient descent as long as  $\mu<\frac{1}{L}$.}
\begin{thm}\cite{attouch2009convergence}
Suppose $\inf_{\R^n} h>-\infty$ and $h$ satisfies the {\L}ojasiewicz inequality. Also assume $\nabla h$ is Lipschitz continuous with constant $L>0$. Let $\{\vx(k)\}$ be the sequence generated by gradient descent $\vx({k+1}) =  \vx(k) - \mu \nabla h(\vx(k))$ with $\mu<\frac{1}{L}$. Then if the sequence $\{\vx(k)\}$ is bounded, it converges to a critical point of $h$.
\label{thm:convergence gd with KL}
\end{thm}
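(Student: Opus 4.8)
The plan is to combine a sufficient-decrease estimate coming from the Lipschitz gradient with the {\L}ojasiewicz inequality to prove that the sequence has finite length. First I would invoke the descent lemma: since $\nabla h$ is $L$-Lipschitz and $\vx(k+1) - \vx(k) = -\mu\nabla h(\vx(k))$, one gets $h(\vx(k+1)) \le h(\vx(k)) - \mu\bigl(1 - \tfrac{L\mu}{2}\bigr)\|\nabla h(\vx(k))\|^2$, and because $\mu < 1/L$ the coefficient $c := \mu\bigl(1 - \tfrac{L\mu}{2}\bigr)$ is strictly positive. Thus $\{h(\vx(k))\}$ is nonincreasing, and being bounded below it converges to some $h^* := \lim_k h(\vx(k))$. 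Telescoping the decrease estimate gives $\sum_k \|\nabla h(\vx(k))\|^2 < \infty$, so $\nabla h(\vx(k)) \to 0$. Since $\{\vx(k)\}$ is bounded it admits a convergent subsequence $\vx(k_\ell) \to \overline{\vx}$; continuity of $\nabla h$ forces $\nabla h(\overline{\vx}) = 0$, so $\overline{\vx}$ is a critical point, and continuity of $h$ gives $h(\overline{\vx}) = h^*$.

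It then remains to upgrade subsequential convergence to convergence of the full sequence, which I would do by a finite-length argument driven by the {\L}ojasiewicz inequality at $\overline{\vx}$. If $h(\vx(k)) = h^*$ for some $k$, monotonicity together with the descent estimate forces $\nabla h$ to vanish thereafter and the sequence is eventually constant, so I may assume $h(\vx(k)) > h^*$ for all $k$. Set $\phi(t) := \tfrac{1}{1-\theta}\,t^{1-\theta}$, which is concave and increasing on $(0,\infty)$ with $\phi'(t) = t^{-\theta}$. For any index with $\vx(k) \in B(\overline{\vx},\delta)$, concavity yields $\phi(h(\vx(k)) - h^*) - \phi(h(\vx(k+1)) - h^*) \ge \phi'(h(\vx(k)) - h^*)\,\bigl(h(\vx(k)) - h(\vx(k+1))\bigr)$. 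Substituting the {\L}ojasiewicz bound $\phi'(h(\vx(k)) - h^*) = (h(\vx(k)) - h^*)^{-\theta} \ge C_1^{-1}\|\nabla h(\vx(k))\|^{-1}$, the sufficient-decrease estimate, and the identity $\|\vx(k+1) - \vx(k)\| = \mu\|\nabla h(\vx(k))\|$ then produces a per-step bound of the form $\|\vx(k+1) - \vx(k)\| \le \tfrac{C_1\mu}{c}\bigl(\phi(h(\vx(k)) - h^*) - \phi(h(\vx(k+1)) - h^*)\bigr)$.

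The remaining difficulty, and the step I expect to be the main obstacle, is that the {\L}ojasiewicz inequality holds only inside $B(\overline{\vx},\delta)$, so I must guarantee the iterates never escape this ball before the per-step estimate can be summed. I would resolve this with the standard trapping induction: using the subsequence $\vx(k_\ell) \to \overline{\vx}$ and $h(\vx(k)) \downarrow h^*$, choose $k_0$ large enough that both $\|\vx(k_0) - \overline{\vx}\|$ and $\tfrac{C_1\mu}{c}\,\phi(h(\vx(k_0)) - h^*)$ are smaller than $\delta/2$, and then show by induction on $k \ge k_0$ that every iterate stays in $B(\overline{\vx},\delta)$. The displacement from $\vx(k_0)$ is controlled by the telescoping sum $\sum_{i=k_0}^{k-1}\|\vx(i+1) - \vx(i)\| \le \tfrac{C_1\mu}{c}\,\phi(h(\vx(k_0)) - h^*) < \delta/2$, which together with $\|\vx(k_0) - \overline{\vx}\| < \delta/2$ keeps each iterate inside the ball, so the per-step estimate remains applicable at every step. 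Summing over all $k \ge k_0$ telescopes the right-hand side to the finite quantity $\tfrac{C_1\mu}{c}\,\phi(h(\vx(k_0)) - h^*)$, establishing $\sum_k \|\vx(k+1) - \vx(k)\| < \infty$. Hence $\{\vx(k)\}$ is Cauchy and converges, and its limit must coincide with the subsequential limit $\overline{\vx}$, which has already been shown to be a critical point of $h$.
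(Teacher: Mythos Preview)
Your proposal is correct and follows the standard {\L}ojasiewicz finite-length argument (sufficient decrease, square-summability of gradients, desingularizing function $\phi$, trapping induction, Cauchy sequence) due to Attouch et al. Note, however, that the paper does not supply its own proof of this theorem: it is stated as a cited result from \cite{attouch2009convergence}, with only a footnote remarking that the proximal-method argument there extends to gradient descent when $\mu < 1/L$; your write-up is precisely that extension.
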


The following result further characterizes the convergence behavior of gradient descent to a second-order critical point.
 \begin{thm}\cite{lee2016gradient}
		Suppose $h$ is a twice-continuously differentiable function and $\nabla h$ is Lipschitz continuous with constant $L>0$. Let $\{\vx(k)\}$ be the sequence generated by gradient descent $\vx({k+1}) =  \vx(k) - \mu \nabla h(\vx(k))$ with $\mu<\frac{1}{L}$. Suppose $\vx(0)$ is chosen randomly from a probability distribution supported on a set $S$ having positive measure. Then the sequence $\{\vx(k)\}$  almost surely avoids strict saddles, where the Hessian has at least one negative eigenvalue.
		\label{thm:avoid saddle gd}
	\end{thm}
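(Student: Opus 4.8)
The plan is to recast gradient descent as the iteration of a single smooth map and to invoke the (center-)stable manifold theorem from dynamical systems, in the spirit of \cite{lee2016gradient}. Define the gradient map $\phi(\vx) := \vx - \mu \nabla h(\vx)$, so that $\vx(k+1) = \phi(\vx(k))$ and the fixed points of $\phi$ are exactly the critical points of $h$. Since $h$ is twice continuously differentiable, $\phi$ is $C^1$ with Jacobian $D\phi(\vx) = \mId - \mu \nabla^2 h(\vx)$. First I would verify that $\phi$ is a diffeomorphism onto its image. Injectivity follows from the Lipschitz bound: for $\vx \neq \vy$, $\|\phi(\vx) - \phi(\vy)\| \geq \|\vx - \vy\| - \mu\|\nabla h(\vx) - \nabla h(\vy)\| \geq (1-\mu L)\|\vx - \vy\| > 0$ since $\mu < 1/L$. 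The same bound $\mu\|\nabla^2 h(\vx)\| \leq \mu L < 1$ shows $D\phi(\vx)$ is invertible everywhere, so $\phi$ is a local diffeomorphism; together these give a $C^1$ inverse on the open image $\phi(\R^n)$.

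Next I would analyze $\phi$ near a strict saddle $\overline{\vx}$. There $\nabla^2 h(\overline{\vx})$ has an eigenvalue $\lambda < 0$, hence $D\phi(\overline{\vx}) = \mId - \mu\nabla^2 h(\overline{\vx})$ has eigenvalue $1 - \mu\lambda > 1$, so the unstable subspace $E^u$ of the linearization is nontrivial. The center-stable manifold theorem then supplies a neighborhood $B_{\overline{\vx}}$ and an embedded $C^1$ local center-stable manifold $W^{\mathrm{cs}}_{\mathrm{loc}}(\overline{\vx}) \subset B_{\overline{\vx}}$ of dimension $n - \dim E^u < n$ that contains every point of $B_{\overline{\vx}}$ whose entire forward orbit stays in $B_{\overline{\vx}}$. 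In particular, any orbit converging to $\overline{\vx}$ must eventually enter $B_{\overline{\vx}}$ and thereafter lie on $W^{\mathrm{cs}}_{\mathrm{loc}}(\overline{\vx})$; since this manifold has positive codimension, it is Lebesgue-null.

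To globalize, observe that if $\{\vx(k)\}$ converges to $\overline{\vx}$ then some tail lies in $B_{\overline{\vx}}$, so $\vx(0) \in \bigcup_{k\geq 0}\phi^{-k}(W^{\mathrm{cs}}_{\mathrm{loc}}(\overline{\vx}))$. Because $\phi$ is a diffeomorphism, each $\phi^{-k}$ carries the null set $W^{\mathrm{cs}}_{\mathrm{loc}}(\overline{\vx})$ to a null set, so this countable union is null. The set of strict saddles may be uncountable, but $\{B_{\overline{\vx}}\}$ is an open cover of it, and since $\R^n$ is second countable (Lindel\"of) a countable subcover exists; the set of initializations whose orbit converges to some strict saddle is therefore contained in a countable union of null sets and is itself Lebesgue-null. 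Since $\vx(0)$ is drawn from a distribution that places no mass on null sets, gradient descent almost surely avoids every strict saddle.

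The main obstacle is the globalization step rather than the local linear algebra. Two points need care: verifying that the hypothesis $\mu < 1/L$ indeed makes $\phi$ a (global) diffeomorphism, which is what legitimizes pulling the measure-zero center-stable manifold back along $\phi^{-k}$ without inflating its measure; and reducing the potentially uncountable union over strict saddles to a countable one, for which second countability of $\R^n$ is essential. A minor additional subtlety is confirming that convergence to $\overline{\vx}$ forces the orbit into the local chart $B_{\overline{\vx}}$ from some index onward, so that the local center-stable manifold characterization applies.
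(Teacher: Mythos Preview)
The paper does not supply its own proof of this theorem; it is quoted directly from \cite{lee2016gradient} and used as a black box. Your proposal reconstructs precisely the argument of that reference: view gradient descent as iteration of the $C^1$ map $\phi(\vx)=\vx-\mu\nabla h(\vx)$, use $\mu<1/L$ to show $\phi$ is a global diffeomorphism, invoke the center--stable manifold theorem at each strict saddle to obtain a local invariant set of positive codimension (hence Lebesgue-null), pull back along $\phi^{-k}$, and cover the strict-saddle set by countably many such charts via second countability. This is correct and matches the cited source, so there is nothing to compare against in the present paper.

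One small point worth tightening: in your final sentence you assert that the initialization law ``places no mass on null sets,'' but the theorem as stated only assumes the distribution is supported on a set $S$ of positive Lebesgue measure, which does not by itself rule out atoms or singular parts. The intended hypothesis (and the one actually needed for the argument) is absolute continuity with respect to Lebesgue measure; you should state this explicitly rather than infer it from ``positive measure support.'' This is a wording issue in the paper's restatement of the result, not a flaw in your strategy.
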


Theorems~\ref{thm:convergence gd with KL} and~\ref{thm:avoid saddle gd} apply for functions $h$ that globally satisfy the {\L}ojasiewicz and Lipschitz gradient conditions. In some problems, however, one or both of these properties may be satisfied only locally. Nevertheless, under an assumption of bounded iterations---as is already made in Theorem~\ref{thm:convergence gd with KL}---it is possible to extend the first- and second-order convergence results to such functions. For example, one can extend Theorem~\ref{thm:convergence gd with KL} as follows by noting that the original derivation in~\cite{attouch2009convergence} used the {\L}ojasiewicz property only locally around limit points of the sequence $\{\vx(k)\}$.

\begin{thm}\cite{attouch2009convergence}
Suppose $\inf_{\R^n} h>-\infty$. For $\rho > 0$, let $B_\rho$ denote the open ball of radius $\rho$:
\[
B_\rho := \{ \vx: ~ \|\vx\|_2 < \rho \},
\]
and suppose $h$ satisfies the {\L}ojasiewicz inequality at all points $\vx \in B_\rho$. Also assume $\nabla h$ is Lipschitz continuous with constant $L>0$. Let $\{\vx(k)\}$ be the sequence generated by gradient descent $\vx({k+1}) =  \vx(k) - \mu \nabla h(\vx(k))$ with $\mu<\frac{1}{L}$. Suppose $\{\vx(k)\} \subseteq B_\rho$ and all limit points of $\{\vx(k)\}$ are in $B_\rho$. Then the sequence $\{\vx(k)\}$ converges to a critical point of $h$.
\label{thm:convergence gd with KL in ball}
\end{thm}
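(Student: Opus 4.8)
The plan is to replay the classical {\L}ojasiewicz-based convergence argument of \cite{attouch2009convergence} for gradient descent, while observing that the {\L}ojasiewicz inequality enters the proof \emph{only} through points that are close to limit points of $\{\vx(k)\}$. Since all limit points are assumed to lie in the open ball $B_\rho$, and $h$ satisfies the {\L}ojasiewicz inequality throughout $B_\rho$, the localized hypothesis is exactly enough to push the original proof through. The argument has three ingredients: (i) a sufficient-decrease estimate, which is purely a consequence of the global Lipschitz-gradient assumption and $\mu < \frac{1}{L}$ and needs no localization; (ii) the identification of limit points as critical points lying in $B_\rho$; and (iii) a finite-length (summability of increments) argument that invokes {\L}ojasiewicz only near a chosen limit point.

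First I would establish sufficient decrease. The descent lemma for $L$-Lipschitz gradients together with the update $\vx(k+1) = \vx(k) - \mu \nabla h(\vx(k))$ gives $h(\vx(k+1)) \le h(\vx(k)) - C \|\vx(k+1) - \vx(k)\|_2^2$ with $C = \frac{1}{\mu}\left(1 - \frac{L\mu}{2}\right) > 0$, using $\|\vx(k+1)-\vx(k)\|_2 = \mu \|\nabla h(\vx(k))\|_2$. Since $\inf h > -\infty$ and $\{h(\vx(k))\}$ is nonincreasing, it converges to some $h^* $; telescoping the decrease estimate shows $\sum_k \|\vx(k+1)-\vx(k)\|_2^2 < \infty$, hence $\|\vx(k+1)-\vx(k)\|_2 \to 0$ and $\nabla h(\vx(k)) \to \vzero$. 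Because $\{\vx(k)\} \subseteq B_\rho$ is bounded, its set of limit points $\Omega$ is nonempty, and by hypothesis $\Omega \subseteq B_\rho$. Continuity of $\nabla h$ then forces $\nabla h(\overline{\vx}) = \vzero$ for every $\overline{\vx} \in \Omega$, so each limit point is a critical point in $B_\rho$, and continuity of $h$ gives $h \equiv h^*$ on $\Omega$. (If $h(\vx(k)) = h^*$ for some finite $k$, the decrease estimate forces the sequence to be eventually constant, and convergence is immediate; so in what follows I assume $h(\vx(k)) > h^*$ for all $k$.)

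The heart of the proof is the finite-length argument. Fix a limit point $\overline{\vx} \in \Omega \subseteq B_\rho$. Since $B_\rho$ is open, there is $r>0$ with $B(\overline{\vx},r) \subseteq B_\rho$; as $\overline{\vx}$ is a critical point in $B_\rho$, Definition~\ref{def:KL} supplies constants $\theta \in [0,1)$, $C_1 > 0$, and $\delta > 0$, which I shrink so that $\delta \le r$, ensuring $B(\overline{\vx},\delta) \subseteq B_\rho$ and that $|h(\vx) - h(\overline{\vx})|^{\theta} \le C_1 \|\nabla h(\vx)\|_2$ holds on $B(\overline{\vx},\delta)$. Introducing the concave desingularizing function $\phi(s) = \frac{C_1}{1-\theta}\, s^{1-\theta}$, whose derivative is $\phi'(s) = C_1 s^{-\theta}$, I combine concavity ($\phi(s)-\phi(t) \ge \phi'(s)(s-t)$), the {\L}ojasiewicz bound applied at $\vx(k)$ with $h(\overline{\vx}) = h^*$, and the sufficient-decrease estimate to obtain, for every $k$ with $\vx(k) \in B(\overline{\vx},\delta)$, the telescoping inequality $\|\vx(k+1)-\vx(k)\|_2 \le \frac{1}{C\mu}\big(\phi(h(\vx(k))-h^*) - \phi(h(\vx(k+1))-h^*)\big)$. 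Summing this over $k$ yields a finite total length, hence the Cauchy property and convergence of $\{\vx(k)\}$ to some $\vx^*$; since $\vx^* \in \Omega$, it is a critical point of $h$, as claimed.

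The main obstacle is the standard ``trapping'' step needed to legitimately apply the telescoping inequality at \emph{every} index: one must show that once $\{\vx(k)\}$ enters a small enough neighborhood of $\overline{\vx}$ it never leaves $B(\overline{\vx},\delta)$. I would handle this by choosing an index $k_0$ so large that $\vx(k_0)$ is within distance, say, $\frac{\delta}{2}$ of $\overline{\vx}$ and $\frac{1}{C\mu}\phi(h(\vx(k_0))-h^*)$ is correspondingly small (possible because $\overline{\vx}$ is a limit point and $\phi(h(\vx(k))-h^*) \to 0$), and then running an induction: the telescoping bound controls the cumulative displacement from $\vx(k_0)$ and keeps the tail inside $B(\overline{\vx},\delta)$, so the {\L}ojasiewicz inequality remains applicable throughout. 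This is precisely the place where openness of $B_\rho$ is essential---it guarantees a full {\L}ojasiewicz neighborhood of $\overline{\vx}$ sits inside $B_\rho$---and, apart from this localization, every step is identical to the derivation in \cite{attouch2009convergence}, which is exactly why the weakened (local) hypothesis suffices.
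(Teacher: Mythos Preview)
Your proposal is correct and takes essentially the same approach as the paper. The paper does not spell out a proof of this theorem at all; it simply observes (in the sentence preceding the statement) that the derivation in \cite{attouch2009convergence} invokes the {\L}ojasiewicz inequality only locally around limit points of $\{\vx(k)\}$, so the localized hypothesis suffices---and your write-up is precisely a careful unpacking of that observation, with the sufficient-decrease, critical-limit-point, and finite-length/trapping steps laid out in the standard way.
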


The following result establishes second-order convergence for a function with a locally Lipschitz gradient.

\begin{thm}
Let $\rho > 0$, and consider an objective function $h$ where:
\begin{enumerate}
\item $\inf_{\R^n} h>-\infty$,
\item $h$ satisfies the {\L}ojasiewicz inequality within $B_\rho$,
\item $h$ is twice-continuously differentiable, and
\item $\left| h\left(\vx\right)\right| \leq L_0$, $\left\Vert \nabla h\left(\vx\right)\right\Vert \leq L_1$, and $\left\Vert \nabla^{2}h(\vx) \right\Vert_{2}\leq L_2$ for all $\vx \in B_{2\rho}$.
\end{enumerate}
Suppose the gradient descent stepsize
\begin{equation}
\mu < \frac{1}{L_{2}+\frac{4L_{1}}{\rho}+\frac{\left(2+2\pi\right)L_{0}}{\rho^{2}}}.
\label{eq:steptilde}
\end{equation}
Suppose $\vx(0)$ is chosen randomly from a probability distribution supported on a set $S \subseteq B_\rho$ with $S$ having positive measure, and suppose that under such random initialization, there is a positive probability that the sequence $\{\vx(k)\}$ remains bounded in $B_\rho$ and all limit points of $\{\vx(k)\}$ are in $B_\rho$.

Then conditioned on observing that $\{\vx(k)\} \subseteq B_\rho$ and all limit points of $\{\vx(k)\}$ are in $B_\rho$, gradient descent converges to a critical point of $h$, and the probability that this critical point is a strict saddle point is zero.
\label{thm:boundedgd}
\end{thm}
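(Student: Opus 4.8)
The plan is to reduce Theorem~\ref{thm:boundedgd} to the two global results already available, namely Theorem~\ref{thm:convergence gd with KL in ball} (first-order convergence under a local {\L}ojasiewicz hypothesis but a \emph{global} Lipschitz gradient) and Theorem~\ref{thm:avoid saddle gd} (almost-sure strict-saddle avoidance under \emph{global} $C^2$ and global Lipschitz-gradient hypotheses). Neither applies to $h$ directly, since $h$ is only controlled inside $B_{2\rho}$. I would therefore construct a surrogate $\widetilde h$ that (i) coincides with $h$ on $B_\rho$, (ii) is globally twice continuously differentiable with a globally Lipschitz gradient whose constant is exactly the denominator in~\eqref{eq:steptilde}, and (iii) is bounded below; I would then run the two global theorems on $\widetilde h$ and transfer their conclusions back to $h$ through a coupling argument valid on the event that the iterates stay in $B_\rho$.

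Concretely, I would set $\widetilde h(\vx) = \phi(\vx)\,h(\vx)$, where $\phi$ is a radially symmetric $C^2$ cutoff with $\phi \equiv 1$ on $B_\rho$, $\phi \equiv 0$ off $B_{2\rho}$, and a transition profile in $\|\vx\|_2$ (for instance a quintic smoothstep, which matches value and first two derivatives at both junctions) chosen so that $\|\nabla\phi\|_2 \le \tfrac{2}{\rho}$ and $\|\nabla^2\phi\|_2 \le \tfrac{2+2\pi}{\rho^2}$ everywhere. By the product rule, $\nabla^2\widetilde h = \phi\,\nabla^2 h + \nabla\phi\,(\nabla h)^\T + \nabla h\,(\nabla\phi)^\T + h\,\nabla^2\phi$; since $\phi$ and its derivatives vanish outside $B_{2\rho}$ while $|h|\le L_0$, $\|\nabla h\|\le L_1$, and $\|\nabla^2 h\|\le L_2$ hold on $B_{2\rho}$, I would bound the symmetric rank-two cross term by $2\|\nabla\phi\|\,\|\nabla h\|$ to obtain $\|\nabla^2\widetilde h\|_2 \le L_2 + \tfrac{4L_1}{\rho} + \tfrac{(2+2\pi)L_0}{\rho^2} =: L_{\widetilde h}$ globally. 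Thus $\nabla\widetilde h$ is globally Lipschitz with constant $L_{\widetilde h}$, so the hypothesis~\eqref{eq:steptilde} is precisely $\mu < 1/L_{\widetilde h}$. Moreover $\widetilde h$ is supported in $B_{2\rho}$ and bounded there, so $\inf_{\R^n}\widetilde h > -\infty$, and since $\widetilde h = h$ on the open set $B_\rho$ it inherits the {\L}ojasiewicz inequality at every point of $B_\rho$. Pinning this global Hessian bound to the exact constant in~\eqref{eq:steptilde} is where I expect the bulk of the technical work to lie.

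Next I would couple the two gradient-descent trajectories. Because $\phi \equiv 1$ on $B_\rho$ we have $\nabla\widetilde h = \nabla h$ there, so starting both iterations from the same $\vx(0) \in S \subseteq B_\rho$ and arguing by induction, the $h$- and $\widetilde h$-iterates agree as long as they remain in $B_\rho$. Hence the event $A := \{\,\{\vx(k)\} \subseteq B_\rho \text{ and all limit points lie in } B_\rho\,\}$ is literally the same event for both dynamics, and on $A$ the two trajectories are identical. On $A$, Theorem~\ref{thm:convergence gd with KL in ball} applied to $\widetilde h$ yields convergence to a single critical point $\vx^\star \in B_\rho$ of $\widetilde h$, which—derivatives agreeing on $B_\rho$—is a critical point of $h$; this is the first conclusion.

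Finally, for the second-order conclusion I would invoke Theorem~\ref{thm:avoid saddle gd} for $\widetilde h$, whose global $C^2$ and global Lipschitz-gradient hypotheses are now met: the set of initializations from which the $\widetilde h$-trajectory converges to a strict saddle of $\widetilde h$ has measure zero, hence probability zero under the given initialization law on $S$. Since $\vx^\star \in B_\rho$ and the Hessians of $h$ and $\widetilde h$ coincide on $B_\rho$, the limit $\vx^\star$ is a strict saddle of $h$ if and only if it is a strict saddle of $\widetilde h$. Therefore the event $\{A \text{ and } \vx^\star \text{ is a strict saddle of } h\}$ is contained in this probability-zero event and so has probability zero; dividing by $\P{A}>0$ shows that, conditioned on $A$, the limiting critical point is almost surely not a strict saddle. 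Beyond the surrogate construction, the only delicate points are ensuring the cutoff is genuinely $C^2$ (so the gradient-descent map $\vx \mapsto \vx - \mu\nabla\widetilde h(\vx)$ remains a $C^1$ diffeomorphism, as required by Theorem~\ref{thm:avoid saddle gd}) and verifying that the coupling makes the two ``stays in $B_\rho$'' events coincide exactly.
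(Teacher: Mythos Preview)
Your proposal is correct and follows essentially the same approach as the paper: multiply $h$ by a $C^2$ radial cutoff supported in $B_{2\rho}$ and equal to $1$ on $B_\rho$, bound $\|\nabla^2\widetilde h\|$ via the product rule to get exactly the constant in~\eqref{eq:steptilde}, couple the two gradient-descent trajectories on $B_\rho$, and then invoke Theorem~\ref{thm:convergence gd with KL in ball} and Theorem~\ref{thm:avoid saddle gd} on $\widetilde h$ before transferring back via conditional probability. The only cosmetic difference is that the paper uses the explicit sine-based transition $w(\vx)=2-\tfrac{\|\vx\|}{\rho}+\tfrac{1}{2\pi}\sin\!\big(\tfrac{2\pi\|\vx\|}{\rho}\big)$ on the annulus (which yields $\|\nabla w\|\le 2/\rho$ and $\|\nabla^2 w\|\le(2+2\pi)/\rho^2$ exactly), whereas you suggest a polynomial smoothstep; either choice works, and your quintic example in fact satisfies the same bounds with room to spare.
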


Theorem~\ref{thm:boundedgd} is proved in Appendix~\ref{sec:proofboundedgd}.

\subsubsection{Convergence Analysis of DGD+LOCAL}

As described in the following theorem, under certain conditions, we can guarantee that the DGD+LOCAL algorithm~\eqref{eq:DGDtemplate} (which is equivalent to gradient descent applied to minimizing $g(\vz)$ in~\eqref{eq:DGD problem}) will converge to a second-order critical point of the objective function $g(\vz)$.
\begin{thm}
Let $f(\vx,\vy) = \sum_{j=1}^J f_j(\vx,\vy_j)$ be an objective function as in~\eqref{eq:centeralized problem} and let $g(\vz)$ be as in \eqref{eq:DGD problem} with $\vz = (\vx^1,\ldots,\vx^J,\vy_1,\ldots,\vy_J)$. Suppose each $f_j$ satisfies $\inf_{\R^n} f_j>-\infty$, is twice continuously-differentiable, and has Lipschitz gradient, i.e., $\nabla f_j$ is Lipschitz continuous with constant $L_j>0$. Suppose $g$ satisfies the {\L}ojasiewicz inequality. Let $L := \max_j L_j$, and let $\widetilde w_{ji}$ and $\mu$ be the DGD+LOCAL weights and stepsize as in~\eqref{eq:DGDtemplate}.

Assume $\omega := \max_{j}\sum_{i\neq j} \widetilde w_{ji}<\frac{1}{2}$. Let $\{ \vz(k) \}$ be the sequence generated by the DGD+LOCAL algorithm in \eqref{eq:DGDtemplate} with
\begin{align}
	\mu < \frac{1 - 2\omega}{L}
\label{eq:stepsize requirement}
\end{align}
and with random initialization from a probability distribution supported on a set $S$ having positive measure. Then if the sequence $\{\vz(k)\}$ is bounded, it almost surely converges to a second-order critical point of the objective function in \eqref{eq:DGD problem}.
\label{thm:dgdconvergegeneric}
\end{thm}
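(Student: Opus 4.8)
The plan is to invoke the equivalence established in Section~\ref{sec:dgdgd}: the DGD+LOCAL iteration~\eqref{eq:DGDtemplate} coincides exactly with gradient descent of stepsize $\mu$ applied to $g(\vz)$ in~\eqref{eq:DGD problem}. Under this reduction the claim becomes a statement purely about gradient descent on $g$, so it suffices to check that $g$ satisfies the hypotheses of both Theorem~\ref{thm:convergence gd with KL} (convergence to a critical point) and Theorem~\ref{thm:avoid saddle gd} (almost-sure avoidance of strict saddles), with Lipschitz constant $L_g$ and the given stepsize $\mu$, and then to combine the two conclusions.

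First I would record the structural properties of $g$. Because each $f_j$ is twice continuously differentiable and the consensus penalty $\sum_i w_{ji}\|\vx^j-\vx^i\|_2^2$ is a smooth quadratic, $g$ is twice continuously differentiable. Because $\inf f_j > -\infty$ and each $w_{ji}\ge 0$ makes the penalty nonnegative, $g$ is bounded below by $\sum_j \inf f_j > -\infty$. The {\L}ojasiewicz property of $g$ is assumed outright. Proposition~\ref{prop:lip} then supplies the global Lipschitz-gradient constant $L_g = L + 2\omega/\mu$.

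The one genuine computation is to verify that the stepsize condition $\mu < 1/L_g$ demanded by both gradient-descent theorems is precisely the stated requirement~\eqref{eq:stepsize requirement}. Substituting $L_g$ and clearing denominators, $\mu < (L + 2\omega/\mu)^{-1}$ is equivalent to $\mu L + 2\omega < 1$, i.e.\ $\mu < (1-2\omega)/L$; the hypothesis $\omega < \tfrac12$ ensures this bound is positive, so an admissible stepsize exists.

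With the hypotheses in place the result follows by combining the two theorems applied to the same trajectory $\{\vz(k)\}$. Theorem~\ref{thm:convergence gd with KL}, using the {\L}ojasiewicz inequality, boundedness below, the $L_g$-Lipschitz gradient, $\mu<1/L_g$, and the assumed boundedness of $\{\vz(k)\}$, yields convergence to some critical point of $g$. Theorem~\ref{thm:avoid saddle gd}, using twice continuous differentiability, the $L_g$-Lipschitz gradient, $\mu<1/L_g$, and the random initialization, yields that the trajectory almost surely does not limit to a strict saddle. I expect the main delicate point to be the interface between these two statements: one must argue that, conditioned on the assumed boundedness, the convergence guarantee and the measure-zero saddle-avoidance set refer to the same random trajectory, so that almost surely the limit is a critical point whose Hessian has no negative eigenvalue---exactly a second-order critical point.
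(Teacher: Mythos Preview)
Your proposal is correct and follows essentially the same route as the paper: reduce DGD+LOCAL to gradient descent on $g$, use Proposition~\ref{prop:lip} to get $L_g = L + 2\omega/\mu$, verify that $\mu < 1/L_g$ is equivalent to~\eqref{eq:stepsize requirement}, and then invoke Theorems~\ref{thm:convergence gd with KL} and~\ref{thm:avoid saddle gd}. If anything, you spell out more of the verifications (boundedness below of $g$, twice differentiability, the stepsize algebra) than the paper's own proof does.
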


\begin{proof}
Recall that running the DGD+LOCAL algorithm~\eqref{eq:DGDtemplate} to minimize the objective function $f(\vx,\vy)$ in~\eqref{eq:centeralized problem} is equivalent to running gradient descent on $g(\vz)$ in~\eqref{eq:DGD problem}. The proof is completed by invoking \Cref{thm:convergence gd with KL} and \Cref{thm:avoid saddle gd} with $h$ replaced by $g$. From Proposition~\ref{prop:lip}, we have that $\nabla g$ is Lipschitz continuous with constant $L_g = L + \frac{2\omega}{\mu}$, and so choosing $\mu$ to satisfy~\eqref{eq:stepsize requirement} ensures that $\mu < \frac{1}{L_g}$ as required in \Cref{thm:convergence gd with KL} and \Cref{thm:avoid saddle gd}.
\end{proof}

\begin{remark}
The requirement that the DGD+LOCAL stepsize $\mu = O(\frac{1}{L})$ also appears in the convergence analysis of DGD in \cite{yuan2016convergence,zeng2018nonconvex}.
\end{remark}

\begin{remark}
The function $g$ is guaranteed to satisfy the {\L}ojasiewicz inequality, for example, if every $f_j$ is semi-algebraic, because this will imply that $g$ is semi-algebraic, and every semi-algebraic function satisfies the {\L}ojasiewicz inequality. 
\end{remark}

\begin{remark}
In order to satisfy~\eqref{eq:stepsize requirement}, it must hold that $\omega < \frac{1}{2}$. In the case where the DGD+LOCAL weight matrix $\widetilde\mW$ is symmetric and doubly stochastic (i.e., $\widetilde\mW$ has nonnegative entries and each of its rows and columns sums to $1$), this condition is equivalent to requiring that each diagonal element of $\widetilde\mW$ is larger than $\frac{1}{2}$. Given any symmetric and doubly stochastic matrix $\widetilde\mW$, one can design a new weight matrix $(\widetilde\mW + \mId)/2$ that satisfies this requirement. This strategy is also mentioned at the end of \cite[Section 2.1]{yuan2016convergence}.
\end{remark}

We also have the following DGD+LOCAL convergence result when the functions $f_j$ have only a locally Lipschitz gradient.

\begin{thm}
Let $f(\vx,\vy) = \sum_{j=1}^J f_j(\vx,\vy_j)$ be an objective function as in~\eqref{eq:centeralized problem} and let $g(\vz)$ be as in \eqref{eq:DGD problem} with $\vz = (\vx^1,\ldots,\vx^J,\vy_1,\ldots,\vy_J)$. Let $\rho > 0$ and suppose each $f_j$ satisfies
\begin{enumerate}
\item $\inf_{\R^n} f_j>-\infty$,
\item $f_j$ is twice-continuously differentiable, and
\item $\left| f_j\left(\vx,\vy_j\right)\right| \leq L_{0,j}$, $\left\Vert \nabla f_j\left(\vx,\vy_j\right)\right\Vert \leq L_{1,j}$, and $\left\Vert \nabla^{2}f_j(\vx,\vy) \right\Vert_{2}\leq L_{2,j}$ for all $(\vx,\vy_j) \in B_{2\rho}$.
\end{enumerate}
Suppose also that $g$ satisfies the {\L}ojasiewicz inequality within $B_\rho$. Let $\widetilde w_{ji}$ and $\mu$ be the DGD+LOCAL weights and stepsize as in~\eqref{eq:DGDtemplate}. Assume $\omega := \max_{j}\sum_{i\neq j} \widetilde w_{ji}< \frac{1}{2}$. Let $\{ \vz(k) \}$ be the sequence generated by the DGD+LOCAL algorithm in \eqref{eq:DGDtemplate} with
\begin{equation}
\mu < \frac{1-2\omega}{\max_j L_{2,j}+\frac{4L_{1,j}}{\rho}+\frac{\left(2+2\pi\right)L_{0,j}}{\rho^{2}}}.
\label{eq:stepsize requirementballfj}
\end{equation}
Suppose $\vz(0)$ is chosen randomly from a probability distribution supported on a set $S \subseteq B_\rho$ with $S$ having positive measure, and suppose that under such random initialization, there is a positive probability that the sequence $\{\vz(k)\}$ remains bounded in $B_\rho$ and all limit points of $\{\vz(k)\}$ are in $B_\rho$.

Then conditioned on observing that $\{\vz(k)\} \subseteq B_\rho$ and all limit points of $\{\vz(k)\}$ are in $B_\rho$, DGD+LOCAL converges to a critical point of the objective function in \eqref{eq:DGD problem}, and the probability that this critical point is a strict saddle point is zero.
\label{thm:dgdconvergegenericballfj}
\end{thm}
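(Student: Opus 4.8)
The plan is to follow the proof of \Cref{thm:dgdconvergegeneric}, but to replace its two global ingredients by their local-in-ball analogues. As established in \Cref{sec:dgdgd}, running DGD+LOCAL \eqref{eq:DGDtemplate} on $f$ in \eqref{eq:centeralized problem} is identical to running gradient descent with stepsize $\mu$ on $g$ in \eqref{eq:DGD problem}, so I would take $h=g$ throughout. Since \Cref{thm:convergence gd with KL in ball} and \Cref{thm:avoid saddle gd} both require a \emph{global} Lipschitz-gradient bound whereas $g$ inherits only \emph{local} bounds from the $f_j$, the essential device---exactly as in the proof of \Cref{thm:boundedgd}---is to replace $g$ by a globally well-behaved surrogate $\widetilde g$ that agrees with $g$ on $B_\rho$, and then to apply those two theorems to $\widetilde g$.

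First I would record the structural hypotheses for $g$. Since the consensus penalty $\sum_{j,i} w_{ji}\|\vx^j-\vx^i\|_2^2$ is nonnegative and each $f_j$ is bounded below, $\inf g > -\infty$; $g$ is twice-continuously differentiable because it is a sum of the $f_j$ and a quadratic; and $g$ satisfies the {\L}ojasiewicz inequality on $B_\rho$ by assumption. To build $\widetilde g$, I would apply the cutoff/smoothing construction from the proof of \Cref{thm:boundedgd} to each $f_j$ \emph{separately}, noting that $\vz\in B_{2\rho}$ forces each sub-vector $(\vx^j,\vy_j)$ into the corresponding ball of the smaller space, so that the bounds $L_{0,j},L_{1,j},L_{2,j}$ are available for the construction. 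This yields, for each $j$, a globally twice-differentiable $\widetilde f_j$ that coincides with $f_j$ on $B_\rho$, is bounded below, and has a globally Lipschitz gradient with constant $D_j := L_{2,j}+\tfrac{4L_{1,j}}{\rho}+\tfrac{(2+2\pi)L_{0,j}}{\rho^2}$. I would then set $\widetilde g := \sum_{j} \widetilde f_j + \sum_{j,i} w_{ji}\|\vx^j-\vx^i\|_2^2$, which equals $g$ on $B_\rho$ and is globally twice-differentiable and bounded below.

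The crux is the stepsize bookkeeping, and the point is to route the local-to-global extension through the individual $f_j$ rather than through $g$: bounding $|g|,\|\nabla g\|,\|\nabla^2 g\|$ directly on $B_{2\rho}$ would introduce spurious factors of $J$ (from summing $J$ terms) and of $1/\mu$ (from the penalty), which are absent from \eqref{eq:stepsize requirementballfj}. Instead, because the $\widetilde f_j$-part of $\widetilde g$ has a block-diagonal Hessian of operator norm at most $\max_j D_j$ and the remaining term is exactly the consensus regularizer, \Cref{prop:lip} gives that $\nabla \widetilde g$ is globally Lipschitz with constant $\max_j D_j + \tfrac{2\omega}{\mu}$, with \emph{no} factor of $J$. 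The cancellation then mirrors \Cref{thm:dgdconvergegeneric}: \eqref{eq:stepsize requirementballfj} says $\mu\,\max_j D_j < 1-2\omega$, so $\mu\big(\max_j D_j + \tfrac{2\omega}{\mu}\big) = \mu\,\max_j D_j + 2\omega < 1$, i.e. $\mu$ is below the reciprocal of the global gradient-Lipschitz constant of $\widetilde g$, as required by \Cref{thm:convergence gd with KL in ball} and \Cref{thm:avoid saddle gd}. I expect this constant-matching to be the main obstacle, precisely because the $1-2\omega$ numerator is what absorbs the penalty's Hessian contribution $2\omega/\mu$, and this absorption only works cleanly when the extension is performed per-$f_j$.

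Finally, on the conditioning event that $\{\vz(k)\}\subseteq B_\rho$ with all limit points in $B_\rho$, gradient descent on $g$ and on $\widetilde g$ generate the same sequence (their gradients agree on the open set $B_\rho$ and the initialization lies in $S\subseteq B_\rho$), and the critical points and strict saddles of the two functions coincide inside $B_\rho$. Applying \Cref{thm:convergence gd with KL in ball} to $\widetilde g$ gives convergence to a critical point, while \Cref{thm:avoid saddle gd} applied to $\widetilde g$ shows that, almost surely over the random initialization, this limit is not a strict saddle; transferring back to $g$ yields the claimed conclusion.
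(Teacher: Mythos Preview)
Your proposal is correct and follows essentially the same approach as the paper: construct $\widetilde g$ by windowing each $f_j$ separately (not $g$ as a whole), invoke \Cref{prop:lip} on the resulting $\widetilde f_j$'s to get the global Lipschitz constant $\max_j D_j + 2\omega/\mu$ without picking up a factor of $J$, verify that \eqref{eq:stepsize requirementballfj} is exactly $\mu < 1/L_{\widetilde g}$, and then apply \Cref{thm:convergence gd with KL in ball} and \Cref{thm:avoid saddle gd} to $\widetilde g$ before transferring back to $g$ on the conditioning event via the conditional-probability argument from the proof of \Cref{thm:boundedgd}. Your explicit discussion of why the extension must be done per-$f_j$ (to avoid the spurious $J$ and $1/\mu$ factors) is exactly the point.
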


\begin{proof}
Recall that running the DGD+LOCAL algorithm~\eqref{eq:DGDtemplate} to minimize the objective function $f(\vx,\vy)$ in~\eqref{eq:centeralized problem} is equivalent to running gradient descent on $g(\vz)$ in~\eqref{eq:DGD problem}. Similar to the approach taken in proving Theorem~\ref{thm:boundedgd}, to deal with the local Lipschitz condition, the proof involves constructing a function $\widetilde{g}$ such that $\widetilde{g}(\vz) = g(\vz)$ for all $\vz \in B_{\rho}$ but where $\widetilde{g}$ has a globally Lipschitz gradient.

To do this, recall the window function $w$ defined in Appendix~\ref{sec:proofboundedgd}. Now, recall that
\[
g(\vz) = \sum_{j=1}^J \left(f_j(\vx^j,\vy_j) + \sum_{i=1}^J w_{ji} \|\vx^j - \vx^i\|_2^2\right)
\]
and define
\begin{equation}
\widetilde{g}\left(\vz\right) = \sum_{j=1}^J \left(\widetilde{f}_j(\vx^j,\vy_j) + \sum_{i=1}^J w_{ji} \|\vx^j - \vx^i\|_2^2\right),
\label{eq:gtilde}
\end{equation}
where
\[
\widetilde{f}_j(\vx^j,\vy_j) = f_j(\vx^j,\vy_j) w(\begin{bmatrix} (\vx^j)^\T & \vy_j^\T \end{bmatrix}^\T).
\]
Since $\widetilde{f}_j(\vx^j,\vy_j) = f_j(\vx^j,\vy_j)$ for $(\vx^j,\vy_j) \in B_\rho$, we have that $\widetilde{g}\left(\vz\right) = g(\vz)$ for all $\vz \in B_\rho$.

We have the following properties for $\widetilde{g}$:
\begin{itemize}
\item Since $g = \widetilde{g}$ in $B_\rho$, $\widetilde{g}$ satisfies the {\L}ojasiewicz inequality in $B_\rho$.
\item Since $f_j \in C^2$ for all $j$ and $w \in C^2$, $\widetilde{g} \in C^2$.
\item Since $\inf_{\R^n} f_j>-\infty$ for all $j$ and $\inf_{\R^n} w>-\infty$, $\inf_{\R^n} \widetilde{g}>-\infty$.
\item To globally bound the Lipschitz constant of the gradient of $\widetilde{g}$, note that
\begin{eqnarray*}
\left\Vert \nabla^{2}\widetilde{f}_j\right\Vert  & = & \left\Vert w\cdot\nabla^{2}f_j+\nabla f_j \cdot \left(\nabla w\right)^{\T}+\nabla w \cdot \left(\nabla f_j\right)^{\T}+f_j\cdot\nabla^{2}w\right\Vert \\
 & \leq & \left| w\right| \left\Vert \nabla^{2}f_j\right\Vert +2\left\Vert \nabla w\right\Vert \left\Vert \nabla f_j\right\Vert +\left| f_j\right| \left\Vert \nabla^{2}w\right\Vert \\
 & \leq & L_{2,j}+\frac{4L_{1,j}}{\rho}+\frac{\left(2+2\pi\right)L_{0,j}}{\rho^{2}} \quad \text{for all}~(\vx^j,\vy_j).
\end{eqnarray*}
Therefore, given the form of $\widetilde{g}$ in~\eqref{eq:gtilde}, we can conclude from Proposition~\ref{prop:lip} that globally, $\nabla \widetilde{g}$ is Lipschitz continuous with constant
\[
L_{\widetilde{g}} = \left( \max_j L_{2,j}+\frac{4L_{1,j}}{\rho}+\frac{\left(2+2\pi\right)L_{0,j}}{\rho^{2}} \right) + \frac{2\omega}{\mu}.
\]
\end{itemize}

Now consider the gradient descent algorithm with stepsize $\mu$ satisfying~\eqref{eq:stepsize requirementballfj}. Define
\begin{align*}
T_g = \{\vz(0) \in B_\rho:&~\text{all}~\{\vz(k)\} \subseteq B_\rho~\text{and all limit points of}~\{\vz(k)\}~\text{are in}~B_\rho\\
&~\text{when gradient descent is run on}~g~\text{starting at}~\vz(0)\}
\end{align*}
and
\begin{align*}
T_{\widetilde{g}} = \{\vz(0) \in B_\rho:&~\text{all}~\{\vz(k)\} \subseteq B_\rho~\text{and all limit points of}~\{\vz(k)\}~\text{are in}~B_\rho\\
&~\text{when gradient descent is run on}~\widetilde{g}~\text{starting at}~\vz(0)\}.
\end{align*}

Similarly, define
\[
\Sigma_g = \{\vz(0) \in B_\rho:~\{\vz(k)\}~\text{converges to a strict saddle when gradient descent is run on}~g~\text{starting at}~\vz(0)\}
\]
and
\[
\Sigma_{\widetilde{g}} = \{\vz(0) \in B_\rho:~\{\vz(k)\}~\text{converges to a strict saddle when gradient descent is run on}~\widetilde{g}~\text{starting at}~\vz(0)\}.
\]
Using the above properties, we see that Theorem~\ref{thm:avoid saddle gd} can be applied to $\widetilde{g}$, and so we conclude that $\Sigma_{\widetilde{g}}$ has measure zero.

Now, after running gradient descent on $g$ from a random initialization as in the theorem statement, condition on observing that $\{\vz(k)\} \subseteq B_\rho$ and all limit points of $\{\vz(k)\}$ are in $B_\rho$, i.e., that $\vz(0) \in T_g$. Because $\{\vz(k)\} \subseteq B_\rho$ and all limit points of $\{\vz(k)\}$ are in $B_\rho$, and because $\{\vz(k)\}$ matches the sequence that would be obtained by running gradient descent on $\widetilde{g}$, we can apply Theorem~\ref{thm:convergence gd with KL in ball} to conclude that $\{\vz(k)\}$ converges to a critical point of $\widetilde{g}$, and since this critical point belongs to $B_\rho$ and $\widetilde{g} = g$ inside $B_\rho$, we conclude that this is also a critical point of $g$.

Finally, using the definition of conditional probability, we have
\begin{align*}
P(\vz(0) \in \Sigma_g | \vz(0) \in T_g)
&= \frac{P( \vz(0) \in \Sigma_g \cap T_g )}{P(\vz(0) \in T_g)} \\
&= \frac{P( \vz(0) \in \Sigma_{\widetilde{g}} \cap T_{\widetilde{g}} )}{P(\vz(0) \in T_g)},
\end{align*}
where the second equality follows from the fact that $\widetilde{g} = g$ inside $B_\rho$: if a sequence of iterations stays bounded inside $B_\rho$ and converges to a strict saddle when gradient descent is run on $g$, the same will hold when gradient descent is run on $\widetilde{g}$, and vice versa. Since $\Sigma_{\widetilde{g}}$ has zero measure and because $\vz(0)$ is chosen randomly from a probability distribution supported on a set $S \subseteq B_\rho$ with $S$ having positive measure, $P( \vz(0) \in \Sigma_{\widetilde{g}} \cap T_{\widetilde{g}} ) = 0$. Also, by assumption, $P(\vz(0) \in T_g) > 0$. Therefore, $P(\vz(0) \in \Sigma_g | \vz(0) \in T_g) = \frac{0}{\text{nonzero}} = 0$.
\end{proof}

\subsection{Geometric Analysis}

Section~\ref{sec:dgdalg} establishes that, under certain conditions, DGD+LOCAL will converge to a second-order critical point of the objective function $g(\vz)$ in~\eqref{eq:DGD problem}.

In this section, we are interested in studying the geometric landscape of the distributed objective function in~\eqref{eq:DGD problem} and comparing it to the geometric landscape of the original centralized objective function in~\eqref{eq:centeralized problem}. In particular, we would like to understand how the critical points of $g(\vz)$ in~\eqref{eq:DGD problem}) are related to the critical points of $f(\vx,\vy)$ in~\eqref{eq:centeralized problem}.

These problems differ in two important respects:
\begin{itemize}
\item The objective function in~\eqref{eq:DGD problem} involves more optimization variables than that in~\eqref{eq:centeralized problem}. Thus, the optimization takes place in a higher-dimensional space and there is the potential for new features to be introduced into the geometric landscape.
\item The objective function in~\eqref{eq:DGD problem} involves a quadratic regularization term that will promote consensus among the variables $\vx^1,\ldots,\vx^J$. This term is absent from~\eqref{eq:centeralized problem}. However, along the {\em consensus subspace} where $\vx^1 = \cdots = \vx^J$, this regularizer will be zero and the objective functions will coincide.
\end{itemize}
Despite these differences, we characterize below some ways in which the geometric landscapes of the two problems may be viewed as equivalent. These results may have independent interest from the specific DGD+LOCAL convergence analysis in Section~\ref{sec:dgdalg}.

Our first result establishes that if the sub-objective functions $f_j$ satisfy certain properties, the formulation \eqref{eq:DGD problem} does not introduce any new global minima outside of the consensus subspace.

\begin{prop}\label{prop:min f = min g for DGD} Let $f(\vx,\vy) = \sum_{j=1}^J f_j(\vx,\vy_j)$ be as in~\eqref{eq:centeralized problem}. Suppose the topology defined by $\mW$ is connected.
Also suppose there exist $\vx^\star$ (which is independent of $j$) and $\vy_j^\star, j\in[J]$ such that
	\begin{align}
	(\vx^\star,\vy_j^\star) \in\argmin_{\vx,\vy_j}f_j(\vx,\vy_j), \ \forall \ j\in [J].
	\label{eq:condition on f = g}\end{align}
	Then $g(\vz)$ defined in \eqref{eq:DGD problem} satisfies
	\[
	\min_{\vz} g(\vz) = \min_{\vx,\vy} f(\vx,\vy),
	\]
	and $g(\vz)$ achieves its global minimum only for $\vz$ with $\vx^1 = \cdots = \vx^J$.
\end{prop}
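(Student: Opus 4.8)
The plan is to establish two inequalities that together force $\min_{\vz} g(\vz) = \min_{\vx,\vy} f(\vx,\vy)$, and then analyze the equality case to pin down the consensus structure. First I would prove the ``easy'' direction $\min_{\vz} g(\vz) \le \min_{\vx,\vy} f(\vx,\vy)$ by exhibiting a feasible point for the $g$-problem that attains the value $\min f$. The natural candidate is the consensus point $\vz^\star = (\vx^\star,\ldots,\vx^\star,\vy_1^\star,\ldots,\vy_J^\star)$ built from the common minimizer in~\eqref{eq:condition on f = g}. Since all the $\vx^j$ coincide, every regularization term $w_{ji}\|\vx^j - \vx^i\|_2^2$ vanishes, so $g(\vz^\star) = \sum_j f_j(\vx^\star,\vy_j^\star) = f(\vx^\star,\vy^\star)$; and because each $(\vx^\star,\vy_j^\star)$ individually minimizes $f_j$, this equals $\min_{\vx,\vy}f(\vx,\vy)$ (the separable structure means the unconstrained minimum of $f$ over $(\vx,\vy)$ is attained by choosing a common $\vx$ that simultaneously minimizes every $f_j$, which is exactly what~\eqref{eq:condition on f = g} guarantees is possible).

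Next I would prove the reverse inequality $\min_{\vz} g(\vz) \ge \min_{\vx,\vy} f(\vx,\vy)$. For any $\vz = (\vx^1,\ldots,\vx^J,\vy_1,\ldots,\vy_J)$, the regularization terms are nonnegative, so
\[
g(\vz) \;\ge\; \sum_{j=1}^J f_j(\vx^j,\vy_j) \;\ge\; \sum_{j=1}^J \min_{\vx,\vy_j} f_j(\vx,\vy_j) \;=\; f(\vx^\star,\vy^\star) \;=\; \min_{\vx,\vy} f(\vx,\vy),
\]
where the second inequality bounds each term below by its own global minimum and the final equality again uses~\eqref{eq:condition on f = g}. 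Combining the two directions gives the claimed equality of minima.

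The remaining and more delicate part is the ``only for'' claim: every global minimizer $\vz$ of $g$ must have $\vx^1 = \cdots = \vx^J$. Suppose $\vz$ attains $g(\vz) = \min_{\vx,\vy}f$. Tracing back through the reverse-inequality chain, both inequalities must hold with equality. Equality in the first forces $\sum_{j}\sum_i w_{ji}\|\vx^j-\vx^i\|_2^2 = 0$, i.e.\ $w_{ji}\|\vx^j-\vx^i\|_2^2 = 0$ for every pair $(i,j)$, so $\vx^i = \vx^j$ whenever $w_{ji} > 0$, that is, whenever $i$ and $j$ are neighbors in the network. This is exactly where the connectivity hypothesis on $\mW$ enters: since the graph is connected, equality of $\vx^i$ across all edges propagates along paths to equality across all nodes, yielding $\vx^1 = \cdots = \vx^J$. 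I expect this propagation-along-a-connected-graph argument to be the main conceptual step, though it is short once phrased correctly; the only subtlety to handle carefully is that the regularizer sum is double-counted (each edge appears as both $(i,j)$ and $(j,i)$ with $w_{ji}=w_{ij}$), but since all summands are nonnegative this does not affect the ``each term is zero'' conclusion.
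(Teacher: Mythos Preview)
Your proposal is correct and follows essentially the same approach as the paper: a two-sided inequality (lower bound by dropping the nonnegative regularizer and bounding each $f_j$ below by its minimum; upper bound by evaluating at a consensus point), followed by the observation that equality forces the regularizer to vanish and hence, by connectivity of $\mW$, forces $\vx^1=\cdots=\vx^J$. Your treatment of the ``only for'' part via edge-wise vanishing and propagation along the connected graph is in fact slightly more explicit than the paper's one-line justification.
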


Proposition~\ref{prop:min f = min g for DGD} is proved in Appendix~\ref{sec:prooffgdgd}. We note that the assumption in Proposition~\ref{prop:min f = min g for DGD} is fairly strong, and while there are problems where it can hold, there are also many problems where it will not hold.

Proposition~\ref{prop:min f = min g for DGD} establishes that, in certain cases, there will exist no global minimizers of the distributed objective function $g(\vz)$ that fall outside of the consensus subspace. (Moreover, and also importantly, there will {\em exist} a global minimizer {\em on} the consensus subspace.) Also relevant is the question of whether there may exist any {\em other} types of critical points (such as local minima or saddle points) outside of the consensus subspace. Under certain conditions, the following proposition ensures that the answer is no.

\begin{prop}
	Let $f(\vx,\vy)$ be as in~\eqref{eq:centeralized problem} and $g(\vz)$ be as in \eqref{eq:DGD problem} with $\vz = (\vx^1,\ldots,\vx^J,\vy_1,\ldots,\vy_J)$. Suppose the matrix $\mW$ is connected and symmetric. Also suppose the gradient of $f_j$ satisfies the following symmetric property:	
	\begin{align}
	\langle \nabla_{\vx} f_j (\vx,\vy_j), \vx\rangle = \langle \nabla_{\vy_j} f_j (\vx,\vy_j), \vy_j\rangle
	\label{eq:symmetric property for gradientdgd}\end{align}
	for all $j\in[J]$.
	Then, any critical point of $g$ must satisfy $\vx^1 = \cdots = \vx^J $.
	\label{prop:nospuriouscritical for DGD}
\end{prop}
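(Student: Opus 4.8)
The plan is to analyze the first-order stationarity condition $\nabla g(\vz) = 0$ directly and to extract from it a quadratic identity in the consensus gaps $\vx^j - \vx^i$ that can vanish only on the consensus subspace. First I would compute the two blocks of $\nabla g$. Differentiating the regularizer $\sum_{k}\sum_i w_{ki}\|\vx^k - \vx^i\|_2^2$ with respect to $\vx^j$, and using the symmetry $w_{ji} = w_{ij}$ to combine the contributions from the terms with $k=j$ and with $i=j$, gives
\[
\nabla_{\vx^j} g = \nabla_{\vx} f_j(\vx^j,\vy_j) + 4\sum_{i=1}^J w_{ji}(\vx^j - \vx^i), \qquad \nabla_{\vy_j} g = \nabla_{\vy_j} f_j(\vx^j,\vy_j),
\]
the second block being unaffected by the regularizer. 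Setting both blocks to zero at a critical point yields, for every $j$, the coupled conditions $\nabla_{\vy_j} f_j(\vx^j,\vy_j) = 0$ and $\nabla_{\vx} f_j(\vx^j,\vy_j) = -4\sum_{i} w_{ji}(\vx^j - \vx^i)$.

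Next I would invoke the symmetric gradient property~\eqref{eq:symmetric property for gradientdgd}. Since the $\vy_j$-stationarity condition forces $\nabla_{\vy_j} f_j(\vx^j,\vy_j) = 0$, the right-hand side of~\eqref{eq:symmetric property for gradientdgd} vanishes, so $\langle \nabla_{\vx} f_j(\vx^j,\vy_j), \vx^j \rangle = 0$. Substituting the $\vx^j$-stationarity expression for $\nabla_{\vx} f_j$ then gives, for each $j$,
\[
\sum_{i=1}^J w_{ji} \langle \vx^j - \vx^i, \vx^j \rangle = 0.
\]

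The crucial step is to sum these identities over $j$ and symmetrize. Writing $S = \sum_{j,i} w_{ji} \langle \vx^j - \vx^i, \vx^j \rangle$, relabeling $i \leftrightarrow j$ and using $w_{ji} = w_{ij}$ shows that $S$ also equals $\sum_{j,i} w_{ji} \langle \vx^i - \vx^j, \vx^i \rangle$; averaging the two expressions collapses the cross terms, giving $2S = \sum_{j,i} w_{ji}\|\vx^j - \vx^i\|_2^2$. Since $S = 0$, this forces $\sum_{j,i} w_{ji}\|\vx^j - \vx^i\|_2^2 = 0$. Because every $w_{ji} \geq 0$, each summand must vanish, so $\vx^j = \vx^i$ whenever $w_{ji} > 0$, i.e., whenever $i$ and $j$ are neighbors. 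Connectivity of the topology defined by $\mW$ then propagates equality across the whole network, yielding $\vx^1 = \cdots = \vx^J$.

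I expect the main (and essentially the only) subtlety to be the symmetrization identity: it is the point where the symmetry of $\mW$ and the vanishing of $\langle \nabla_{\vx} f_j, \vx^j \rangle$ conspire to convert a sum of sign-indefinite inner products into a manifestly nonnegative sum of squared consensus gaps. Once that identity is in hand, the conclusion follows from nonnegativity of the weights and graph connectivity. Everything else is a routine gradient computation together with the substitution enabled by the symmetric property.
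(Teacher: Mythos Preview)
Your proposal is correct and follows essentially the same argument as the paper: compute the two gradient blocks, use the $\vy_j$-stationarity together with the symmetric property~\eqref{eq:symmetric property for gradientdgd} to kill $\langle \nabla_{\vx} f_j,\vx^j\rangle$, sum over $j$, symmetrize via $w_{ji}=w_{ij}$ to obtain $\sum_{j,i} w_{ji}\|\vx^j-\vx^i\|_2^2=0$, and conclude by nonnegativity and connectivity. Your presentation is slightly more explicit than the paper's on the symmetrization step (and your factor $4$ in the gradient of the regularizer is in fact the correct constant), but the route is identical.
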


Proposition~\ref{prop:nospuriouscritical for DGD} is proved in Appendix~\ref{sec:proofnscdgd}.

Finally, we can also make a statement about the behavior of critical points that do fall on the consensus subspace.

\begin{thm}
	Let $\setC_f$ denote the set of critical points of \eqref{eq:centeralized problem}:
	\begin{align*}
	\setC_f: = \left\{\vx,\vy: \nabla f(\vx,\vy) = \vzero \right\},
\end{align*}
	and let $\setC_g$ denote the set of critical points of \eqref{eq:DGD problem}:
	\begin{align*}
	\setC_g: = \bigg\{\vz: \nabla g(\vz) = \vzero\bigg\}.
	\end{align*}
	Then, for any $\vz = (\vx^1,\ldots,\vx^J,\vy)\in\setC_g$ with $\vx^1 = \cdots = \vx^J = \vx$, we have $(\vx,\vy)\in \setC_f$. Furthermore, if $(\vx,\vy)$ is a strict saddle of $f$, then $\vz = (\vx,\dots,\vx,\vy)$ is also a strict saddle of $g$.
	\label{thm:strict saddle preserved for DGD}\end{thm}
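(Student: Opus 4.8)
The plan is to prove both assertions by direct computation with the gradient and Hessian of $g$, exploiting that the consensus regularizer depends only on the differences $\vx^j-\vx^i$. Throughout, write $R(\vx^1,\ldots,\vx^J) := \sum_{j=1}^J\sum_{i=1}^J w_{ji}\|\vx^j-\vx^i\|_2^2$ so that $g = \sum_{j=1}^J f_j(\vx^j,\vy_j) + R$.

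\textbf{Claim 1 (critical point).} First I would compute the blocks of $\nabla g$: since $R$ does not involve $\vy$, one has $\nabla_{\vy_j} g = \nabla_{\vy} f_j(\vx^j,\vy_j)$, while using the symmetry and zero diagonal of $\mW$ one gets $\nabla_{\vx^j} g = \nabla_{\vx} f_j(\vx^j,\vy_j) + 4\sum_{i} w_{ji}(\vx^j-\vx^i)$. At a point $\vz\in\setC_g$ with $\vx^1=\cdots=\vx^J=\vx$, every difference $\vx^j-\vx^i$ vanishes, so the regularizer gradient is zero block-by-block. Because $\nabla g(\vz)=\vzero$ forces \emph{every} block to vanish, I conclude $\nabla_{\vx} f_j(\vx,\vy_j)=\vzero$ and $\nabla_{\vy} f_j(\vx,\vy_j)=\vzero$ for each $j$. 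Summing the first equation over $j$ yields $\nabla_{\vx} f(\vx,\vy)=\vzero$, and the second gives $\nabla_{\vy_j} f(\vx,\vy)=\vzero$ for all $j$; hence $(\vx,\vy)\in\setC_f$.

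\textbf{Claim 2 (strict saddle), Hessian decomposition.} Here $\vz\in\setC_g$ is already a critical point of $g$, and by Claim 1 we have $(\vx,\vy)\in\setC_f$, so it remains only to exhibit negative curvature of $\nabla^2 g(\vz)$. I would write $\nabla^2 g = H_f + H_R$, where $H_f$ is the Hessian of $\sum_j f_j$ and is block diagonal with diagonal blocks $\nabla^2 f_j(\vx^j,\vy_j)$ (since $f_j$ touches only $(\vx^j,\vy_j)$), and $H_R$ acts only on the $\vx$-coordinates, equaling $4(\mL\otimes\mId)$ with $\mL$ the weighted graph Laplacian of $\mW$. The two structural facts I need are that $H_R\succeq 0$ and that $H_R$ kills the consensus direction: because $R$ is a function of the differences $\vx^j-\vx^i$ alone, it is constant along any translation of all $\vx^j$ by a common vector, equivalently $\mL\vone=\vzero$.

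\textbf{Claim 2, lifting the saddle direction.} Let $(\vu,\vv)$ with $\vv=(\vv_1,\ldots,\vv_J)$ satisfy $(\vu,\vv)^\T\nabla^2 f(\vx,\vy)(\vu,\vv)<0$, which exists because $(\vx,\vy)$ is a strict saddle of $f$. I would lift it to $\vd=(\vu,\ldots,\vu,\vv_1,\ldots,\vv_J)$, consensus in the $\vx$-blocks. Then $\vd^\T H_R\vd=0$ by the previous paragraph, and summing the per-block forms gives $\vd^\T H_f\vd = \sum_{j}(\vu,\vv_j)^\T\nabla^2 f_j(\vx,\vy_j)(\vu,\vv_j)$. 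Expanding $\nabla^2 f$ in its $\vx$-$\vx$ block $\sum_j\nabla^2_{\vx\vx}f_j$, its $\vx$-$\vy_j$ blocks $\nabla^2_{\vx\vy_j}f_j$, and its (block-diagonal) $\vy_j$-$\vy_j$ blocks $\nabla^2_{\vy_j\vy_j}f_j$ shows this sum equals $(\vu,\vv)^\T\nabla^2 f(\vx,\vy)(\vu,\vv)$. Hence $\vd^\T\nabla^2 g(\vz)\vd<0$, so $\nabla^2 g(\vz)$ has a negative eigenvalue, and together with $\vz\in\setC_g$ this makes $\vz$ a strict saddle of $g$.

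The calculations are routine; the one place demanding care is verifying that lifting preserves the quadratic form \emph{exactly}, namely that the $f$-part of $\nabla^2 g$ restricted to the consensus direction reproduces $\nabla^2 f$ with no cross terms lost, while the regularizer contributes precisely zero. The cleanest way to secure the latter is the ``function of differences'' observation ($\mL\vone=\vzero$) rather than an entrywise Laplacian expansion. I would also flag that the symmetry and zero diagonal of $\mW$ are what produce the clean factor-of-four Laplacian gradient in Claim 1; without symmetry the same conclusions hold but the bookkeeping is heavier.
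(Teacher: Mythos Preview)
Your proposal is correct and follows essentially the same route as the paper: compute the block gradients of $g$, observe that the regularizer terms vanish on the consensus subspace so that each $\nabla_{\vx}f_j$ and $\nabla_{\vy_j}f_j$ must vanish (whence $(\vx,\vy)\in\setC_f$), and then lift a negative-curvature direction $(\vq_\vx,\vq_\vy)$ of $\nabla^2 f$ to the consensus direction $(\vq_\vx,\ldots,\vq_\vx,\vq_\vy)$ where the regularizer Hessian contributes zero and the $f$-part reproduces the quadratic form of $\nabla^2 f$ exactly. Your Laplacian framing ($H_R = 4(\mL\otimes\mId)$, $\mL\vone=\vzero$) is a clean way to see why the consensus direction is annihilated, but it is the same computation the paper does directly; the observation $H_R\succeq 0$ is true but unused.
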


The proof of Theorem~\ref{thm:strict saddle preserved for DGD} is in Appendix~\ref{sec:proofsspdgd}.

\section{Analysis of Distributed Matrix Factorization}
\label{sec:theorypca}

We now consider the prototypical low-rank matrix approximation in factored form, where given a data matrix $\mY \in\R^{n\times m}$, we seek to solve
\begin{align}
\minimize_{\mU \in \R^{n\times r},\mV \in \R^{m\times r}} \| \mU\mV^\T - \mY \|_F^2.
\label{eq:rankrBM}
\end{align}
Here $\mU \in \R^{n\times r}$ and $\mV \in \R^{m\times r}$ are tall matrices, and $r$ is chosen in advance to allow for a suitable approximation of $\mY$. In some of our results below, we will assume that the data matrix $\mY$ has rank at most~$r$.

One can solve problem~\eqref{eq:rankrBM} using local search algorithms such as gradient descent. Such algorithms do not require expensive SVDs, and the storage complexity for $\mU$ and $\mV$ scales with $(n+m)r$, which is smaller than $nm$ as for $\mY$. Unfortunately, problem~\eqref{eq:rankrBM} is nonconvex in the optimization variables $(\mU,\mV)$. Thus, the question arises of whether local search algorithms such as gradient descent actually converge to a global minimizer of~\eqref{eq:rankrBM}. Using geometric analysis of the critical points of problem~\eqref{eq:rankrBM}, however, it is possible to prove convergence to a global minimizer.

In Appendix~\ref{sec:geometry for pca}, building on analysis in~\cite{nouiehed2018learning}, we prove the following result about the favorable geometry of the nonconvex problem~\eqref{eq:rankrBM}.
\begin{thm}
For any data matrix $\mY$, every critical point (i.e., every point where the gradient is zero) of problem \eqref{eq:rankrBM} is either a global minimum or a strict saddle point, where the Hessian has at least one negative eigenvalue.
\label{thm:centerss}
\end{thm}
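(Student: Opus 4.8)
The plan is to read off the gradient and Hessian of $F(\mU,\mV) := \|\mU\mV^\T-\mY\|_F^2$ directly. Writing $\mE := \mU\mV^\T - \mY$ for the residual, the gradient is $\nabla_{\mU}F = 2\mE\mV$ and $\nabla_{\mV}F = 2\mE^\T\mU$, so a critical point is characterized by $\mE\mV = \mzero$ and $\mE^\T\mU = \mzero$. Differentiating once more shows that the Hessian quadratic form along a direction $(\mtx{\Delta}_U,\mtx{\Delta}_V)$ is
\[
2\bigl\|\mtx{\Delta}_U\mV^\T + \mU\mtx{\Delta}_V^\T\bigr\|_F^2 + 4\bigl\langle \mE, \mtx{\Delta}_U\mtx{\Delta}_V^\T\bigr\rangle .
\]
Everything then reduces to (a) classifying the critical points and identifying which are global minima, and (b) exhibiting, at every non-global critical point, a direction $(\mtx{\Delta}_U,\mtx{\Delta}_V)$ that makes this form strictly negative.

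For step (a) I would first show that at a critical point the product $\mX := \mU\mV^\T$ is a projection of $\mY$ onto a singular subspace. From $\mE^\T\mU = \mzero$ one gets $\mU^\T\mE = \mzero$, hence $\mP_{\mU}\mE = \mzero$ where $\mP_{\mU} = \mU(\mU^\T\mU)^{\dagger}\mU^\T$; since $\Range(\mX)\subseteq\Range(\mU)$ this gives $\mX = \mP_{\mU}\mY$, and symmetrically $\mX = \mY\mP_{\mV}$. Combining these identities yields $\mP_{\mU}\mY\mY^\T = \mY\mP_{\mV}\mY^\T$, whose right-hand side is symmetric, so $\mP_{\mU}$ commutes with $\mY\mY^\T$. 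Choosing an SVD $\mY = \mA\mSigma\mB^\T$ adapted to $\Range(\mU)$, this commutation forces $\mX = \sum_{i\in\Omega}\sigma_i\va_i\vb_i^\T$ for an index set $\Omega$ with $|\Omega|\le r$. The objective value there is $\sum_{i\notin\Omega}\sigma_i^2$, which by Eckart--Young is globally minimal exactly when $\Omega$ selects $r$ of the largest singular values; all other choices of $\Omega$ give non-global critical points. The care needed here is to handle repeated and zero singular values, where $\Range(\mU)$ may meet a degenerate eigenspace of $\mY\mY^\T$ in a non-coordinate subspace; one resolves this by fixing the SVD basis compatibly with $\Range(\mU)$.

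For step (b), at a non-global critical point $\Omega$ fails to be a top-$r$ selection, so one of two situations holds: (i) there are indices $p\notin\Omega$ and $q\in\Omega$ with $\sigma_p>\sigma_q>0$, or (ii) $|\Omega|<r$ and some $\sigma_p>0$ with $p\notin\Omega$ is uncaptured. In both cases the characterization supplies the orthogonalities $\va_p\perp\Range(\mU)$ and $\vb_p\perp\Range(\mV)$, together with $\langle\mE,\va_p\vb_p^\T\rangle = -\sigma_p$. For (i) I would set $\valpha := \mV^\T\vb_q/\sigma_q$ and $\vbeta := \mU^\T\va_q/\sigma_q$, which satisfy $\mU\valpha = \va_q$, $\mV\vbeta = \vb_q$, and the clean identity $\vbeta^\T\valpha = \va_q^\T\mX\vb_q/\sigma_q^2 = 1/\sigma_q$. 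Taking $\mtx{\Delta}_U = \va_p\vbeta^\T$ and $\mtx{\Delta}_V = s\,\vb_p\valpha^\T$, the two pieces of the linear part are Frobenius-orthogonal, so the Hessian form becomes $2(1+s^2) - 4s\,\sigma_p/\sigma_q$, whose minimum over $s$ is $2 - 2(\sigma_p/\sigma_q)^2 < 0$. For (ii), since $\rank\mX<r$ forces $\mU$ or $\mV$ to be rank-deficient, I would pick $\vu\in\Null(\mV)\setminus\{\vzero\}$ (say), set $\mtx{\Delta}_U = \va_p\vu^\T$ and $\mtx{\Delta}_V = s\,\vb_p\vu^\T$, which kills the $\mtx{\Delta}_U\mV^\T$ term and yields $2s^2\|\mU\vu\|_2^2 - 4s\,\sigma_p\|\vu\|_2^2$, strictly negative for small $s>0$.

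The main obstacle is step (a): establishing the rigid structure $\mX = \sum_{i\in\Omega}\sigma_i\va_i\vb_i^\T$ of every critical point, and doing so robustly when $\mY$ has repeated or zero singular values (which is also what legitimizes the orthogonalities used in step (b)). Once that structure is in hand the negative-curvature directions are short to verify; the only genuine subtlety there is guessing the right rank-one perturbations and checking that the linear part $\mtx{\Delta}_U\mV^\T + \mU\mtx{\Delta}_V^\T$ splits into Frobenius-orthogonal pieces, so that it contributes a controlled positive amount that the negative cross term $4\langle\mE,\mtx{\Delta}_U\mtx{\Delta}_V^\T\rangle$ overwhelms.
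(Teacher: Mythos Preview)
Your approach is correct and takes a genuinely different route from the paper's. The paper does not classify critical points directly; instead it assembles the result from three pieces: a cited lemma from \cite{nouiehed2018learning} handling the degenerate case $\rank(\mU\mV^\T)<r$, a cited lemma from \cite{zhu2017global} handling \emph{balanced} critical points ($\mU^\T\mU=\mV^\T\mV$), and a short reduction showing that any non-degenerate critical point $(\mU,\mV)$ can be rewritten as a balanced one $(\widetilde\mU,\widetilde\mV)=(\mU\mD,\mV\mG)$ with $\widetilde\mU\widetilde\mV^\T=\mU\mV^\T$, after which a negative-curvature direction for $(\widetilde\mU,\widetilde\mV)$ is pulled back through $\mD^{-1},\mG^{-1}$. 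Your route is the classical direct one: the commutation $\mP_{\mU}\mY\mY^\T=\mY\mY^\T\mP_{\mU}$ pins down the partial-SVD structure of $\mU\mV^\T$ at every critical point, and then explicit rank-one directions exhibit the strict saddle. Your argument is self-contained and makes the geometry visible (one sees exactly which singular component supplies the descent), whereas the paper's is shorter on the page only because the substantive work is delegated to two external references; the balanced-factorization trick is elegant but opaque about where the negative curvature actually lives. One small remark: in your case (ii) you should note that $\rank(\mU\mV^\T)<r$ forces $\rank(\mU)<r$ \emph{or} $\rank(\mV)<r$ (not necessarily both), so the symmetric construction with $\vu\in\Null(\mU)$ may be needed instead---you gesture at this with ``(say)'' but it is worth making explicit.
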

Such favorable geometry has been used in the literature to show that local search algorithms (particularly gradient descent with random initialization~\cite{lee2016gradient}) will converge to a global minimum of the objective function.

\subsection{Distributed Problem Formulation}

We are interested in generalizing the matrix approximation problem from centralized to distributed scenarios. To be specific, suppose the columns of the data matrix $\mY$ are distributed among $J$ nodes/sensors. Without loss of generality, partition the columns of $\mY$ as
\begin{align*}
\mY = \begin{bmatrix} \mY_1 & \mY_2 & \cdots & \mY_J \end{bmatrix},
\end{align*}
where for $j \in \{1,2,\dots,J\}$, matrix $\mY_j$ (which is stored at node $j$) has size $n \times m_j$, and where $m = \sum_{j=1}^J m_j$. Partitioning $\mV$ similarly as
\begin{align}
\mV = \begin{bmatrix}\mV_1^\T & \cdots & \mV_J^\T\end{bmatrix}^\T,
\label{eq:vpart}
\end{align}
where $\mV_j$ has size $m_j \times r$, we obtain the following optimization problem
\begin{align}
\minimize_{\mU,\mV_1,\dots,\mV_J} \sum_{j=1}^J \| \mU\mV_j^\T - \mY_j \|_F^2,
\label{eq:rankrBMconsensus}
\end{align}
which is exactly equivalent to~\eqref{eq:rankrBM}. Problem~\eqref{eq:rankrBMconsensus}, in turn, can be written in the form of problem~\eqref{eq:centeralized problem} by taking
\begin{align}
\vx = \text{vec}(\mU), ~~ \vy_j = \text{vec}(\mV_j), ~~ \text{and}~ f_j(\vx,\vy_j) = \| \mU\mV_j^\T - \mY_j \|_F^2.
\label{eq:xyfj}
\end{align}
Consequently, we can use the analysis from Section~\ref{sec:theoryunc} to study the performance of DGD+LOCAL~\eqref{eq:DGDtemplate} when applied to problem~\eqref{eq:rankrBMconsensus}.

For convenience, we note that in this context the DGD+LOCAL iterations~\eqref{eq:DGDtemplate} take the form
\begin{align}
\mU^j(k+1) &= \sum_{i=1}^J \left(\widetilde w_{ji}\mU^i(k)\right) - 2 \mu (\mU^j(k) \mV_j^\T(k) - \mY_j) \mV_j(k), \nonumber \\
\mV_j(k+1) & = \mV_j(k) - 2\mu (\mU^j(k) \mV_j^\T(k) - \mY_j)^\T \mU^j(k),
\label{eq:DGDtemplatePCA}
\end{align}
and the corresponding gradient descent objective function~\eqref{eq:DGD problem} takes the form
\begin{equation}
\begin{split}
&\minimize_{\vz} g(\vz) = \sum_{j=1}^J \left(\| \mU^j \mV_j^\T - \mY_j \|_F^2 + \sum_{i=1}^J w_{ji} \|\mU^j - \mU^i\|_F^2\right),
\end{split}
\label{eq:DGD problem PCA}
\end{equation}
where $\mU^1, \dots, \mU^J \in \R^{n\times r}$ are local copies of the optimization variable $\mU$; $\mV_1,\dots,\mV_J$ are a partition of $\mV$ as in~\eqref{eq:vpart}; and the weights $\{w_{ji}\}$ are determined by $\{\widetilde{w}_{ji}\}$ and $\mu$ as in~\eqref{eq:wtildetow}.

Problems~\eqref{eq:rankrBMconsensus} and~\eqref{eq:DGD problem PCA} (as special cases of problems~\eqref{eq:centeralized problem} and~\eqref{eq:DGD problem}, respectively) satisfy many of the assumptions required for the geometric and algorithmic analysis in Section~\ref{sec:theoryunc}. We use these facts in proving our main
result for the convergence of DGD+LOCAL on the matrix factorization problem.

\begin{thm}
Suppose $\text{rank}(\mY) \le r$. Suppose DGD+LOCAL~\eqref{eq:DGDtemplatePCA} is used to solve problem~\eqref{eq:rankrBMconsensus}, with weights $\{\widetilde w_{ji}\}$ and stepsize
\begin{equation}
\mu < \frac{1 - 2\omega}{\max_j \; (212 + 64\pi)\rho^2+34\|\mY_j\|_F  + \frac{(4+4\pi)}{\rho^2} \| \mY_j \|_F^2}
\label{eq:stepsize requirementballMFfj}
\end{equation}
for some $\rho > 0$ and where $\omega := \max_{j}\sum_{i\neq j} \widetilde w_{ji}< \frac{1}{2}$. Suppose the $J \times J$ connectivity matrix $\mW = \{w_{ji}\}$ (with $w_{ji}$ defined in~\eqref{eq:wtildetow}) is connected and symmetric. Let $\{ \vz(k) \}$ be the sequence generated by the DGD+LOCAL algorithm. Suppose $\vz(0)$ is chosen randomly from a probability distribution supported on a set $S \subseteq B_\rho$ with $S$ having positive measure, and suppose that under such random initialization, there is a positive probability that the sequence $\{\vz(k)\}$ remains bounded in $B_\rho$ and all limit points of $\{\vz(k)\}$ are in $B_\rho$.

Then conditioned on observing that $\{\vz(k)\} \subseteq B_\rho$ and all limit points of $\{\vz(k)\}$ are in $B_\rho$, DGD+LOCAL almost surely converges to a solution $\vz^\star = (\mU^{1\star},\ldots,\mU^{J\star},\mV_1^\star,\ldots,\mV_J^\star)$ with the following properties:
\begin{itemize}
\item Consensus: $\mU^{1\star} = \cdots = \mU^{J\star} = \mU^\star$.
\item Global optimality: $(\mU^\star,\mV^\star)$ is a global minimizer of~\eqref{eq:rankrBM}, where $\mV^\star$ denotes the concatenation of $\mV_1^\star,\ldots,\mV_J^\star$ as in~\eqref{eq:vpart}.
\end{itemize}
\label{thm:mainRevisedfj}
\end{thm}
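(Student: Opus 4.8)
The plan is to verify that the distributed matrix factorization problem \eqref{eq:DGD problem PCA} satisfies every hypothesis needed to invoke the general algorithmic and geometric machinery of Section~\ref{sec:theoryunc}, and then to chain those results together to extract consensus and global optimality. Throughout I would use the identification $f_j(\vx,\vy_j) = \|\mU\mV_j^\T - \mY_j\|_F^2$ with $\vx = \text{vec}(\mU)$ and $\vy_j = \text{vec}(\mV_j)$ as in \eqref{eq:xyfj}, so that $g$ is exactly the objective in \eqref{eq:DGD problem PCA}. The argument proceeds in four stages: (i) apply the local-Lipschitz convergence result to land at a non-strict-saddle critical point of $g$; (ii) force consensus on that critical point; (iii) pull the point back to a critical point of the centralized objective and rule out that it is a strict saddle; and (iv) invoke the favorable centralized geometry to conclude global optimality.

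First I would establish convergence to a critical point of $g$ that, conditioned on the boundedness event, is almost surely not a strict saddle, by applying \Cref{thm:dgdconvergegenericballfj}. Checking its hypotheses for the factored $f_j$ is mostly routine: each $f_j$ is a nonnegative polynomial, so $\inf f_j = 0 > -\infty$ and $f_j \in C^2$; being polynomial, each $f_j$ is semi-algebraic, hence $g$ is semi-algebraic and satisfies the {\L}ojasiewicz inequality (in particular within $B_\rho$). The quantitative step is to bound $|f_j|$, $\|\nabla f_j\|$, and $\|\nabla^2 f_j\|$ over $B_{2\rho}$ to produce explicit constants $L_{0,j}, L_{1,j}, L_{2,j}$, and then to verify that substituting these into \eqref{eq:stepsize requirementballfj} reproduces exactly the stepsize condition \eqref{eq:stepsize requirementballMFfj}. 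I expect this constant-tracking to be the \emph{main obstacle}: it requires carefully bounding the gradient $2(\mU\mV_j^\T - \mY_j)\mV_j$ and the Hessian of $f_j$ in terms of $\rho$ and $\|\mY_j\|_F$ on $B_{2\rho}$ (using $\|\mU\|_F, \|\mV_j\|_F \le 2\rho$ there, together with inequalities such as $\|\mU\mV_j^\T - \mY_j\|_F^2 \le 2\|\mU\mV_j^\T\|_F^2 + 2\|\mY_j\|_F^2$) so that the numerical coefficients $(212+64\pi)$, $34$, and $(4+4\pi)$ emerge.

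Second, I would force the limit point onto the consensus subspace using \Cref{prop:nospuriouscritical for DGD}. The matrix $\mW$ is connected and symmetric by assumption, and the factored objective satisfies the symmetric gradient property \eqref{eq:symmetric property for gradientdgd}: writing $E = \mU\mV_j^\T - \mY_j$, a direct computation gives $\langle \nabla_{\mU} f_j, \mU\rangle = 2\trace(\mV_j^\T E^\T \mU) = 2\trace(\mU^\T E \mV_j) = \langle \nabla_{\mV_j} f_j, \mV_j\rangle$, where the middle equality is just invariance of trace under transposition. Hence every critical point of $g$, and in particular the limit $\vz^\star$, lies on the consensus subspace, giving $\mU^{1\star} = \cdots = \mU^{J\star} =: \mU^\star$, which is the first claimed property.

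Third and fourth, I would transfer criticality and the saddle structure back to \eqref{eq:rankrBM}. Since $\vz^\star$ is a consensus critical point of $g$, \Cref{thm:strict saddle preserved for DGD} yields $(\mU^\star,\mV^\star) \in \setC_f$, i.e.\ a critical point of the centralized problem, where $\mV^\star$ is the concatenation in \eqref{eq:vpart}. The contrapositive of the same theorem shows that if $(\mU^\star,\mV^\star)$ were a strict saddle of $f$, then $\vz^\star$ would be a strict saddle of $g$; but by the first stage $\vz^\star$ is almost surely not a strict saddle of $g$. By \Cref{thm:centerss} every centralized critical point is either a global minimum or a strict saddle, so $(\mU^\star,\mV^\star)$ is almost surely a global minimizer of \eqref{eq:rankrBM}, the second claimed property. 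Carrying the conditioning on the boundedness event through exactly as in the proof of \Cref{thm:dgdconvergegenericballfj} delivers the ``almost surely'' conclusion. Finally, the hypothesis $\text{rank}(\mY)\le r$ guarantees that the centralized optimum is a perfect factorization (value zero) and is precisely the condition \eqref{eq:condition on f = g} under which \Cref{prop:min f = min g for DGD} further certifies that no spurious global minimizer of $g$ exists off the consensus subspace, complementing the consensus conclusion above.
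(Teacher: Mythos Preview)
Your proposal is correct and follows essentially the same approach as the paper's proof: verify the hypotheses of \Cref{thm:dgdconvergegenericballfj} (including the explicit computation of $L_{0,j},L_{1,j},L_{2,j}$ on $B_{2\rho}$ so that \eqref{eq:stepsize requirementballfj} reduces to \eqref{eq:stepsize requirementballMFfj}), then apply \Cref{prop:nospuriouscritical for DGD}, \Cref{thm:strict saddle preserved for DGD}, and \Cref{thm:centerss} in sequence. The only cosmetic difference is ordering: the paper invokes \Cref{prop:min f = min g for DGD} (using $\text{rank}(\mY)\le r$) \emph{before} \Cref{prop:nospuriouscritical for DGD} to note that a non-strict-saddle critical point of $g$ exists, whereas you place this observation at the end as a complementary remark---but the logical chain is identical.
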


\begin{proof}
We begin by arguing that DGD+LOCAL converges almost surely (when $\vz(0)$ is chosen randomly inside $B_\rho$) to a second-order critical point of~\eqref{eq:DGD problem PCA}. To do this, our goal is to invoke Theorem~\ref{thm:dgdconvergegenericballfj}. We note that each $f_j$ defined in~\eqref{eq:xyfj} satisfies $\inf_{\mU,\mV_j} f_j>-\infty$ and is twice-continuously differentiable. Also, since the functions $f_j$ are semi-algebraic, $g$ satisfies the {\L}ojasiewicz inequality globally. The functions $f_j$ do not have globally Lipschitz gradient. However, we can find quantities $L_{0,j}$, $L_{1,j}$, $L_{2,j}$ such that $\left| f_j\left(\vx,\vy_j\right)\right| \leq L_{0,j}$, $\left\Vert \nabla f_j\left(\vx,\vy_j\right)\right\Vert \leq L_{1,j}$, and $\left\Vert \nabla^{2}f_j(\vx,\vy) \right\Vert _{2}\leq L_{2,j}$ for all $(\vx,\vy_j) \in B_{2\rho}$. For $L_{0,j}$:
\begin{align*}
\left| f_j\left(\vx,\vy_j\right)\right| &= \| \mU\mV_j^\T - \mY_j \|_F^2 \\
&\le ( \| \mU\mV_j^\T\|_F + \| \mY_j \|_F)^2 \\
&\le ( \| \mU \|_F \| \mV^j \|_F + \| \mY_j \|_F)^2 \\
&\le ( 4\rho^2 +  \| \mY_j \|_F)^2 \\
&\le 32 \rho^4 + 2\| \mY_j \|_F^2.
\end{align*}
For $L_{1,j}$:
\begin{align*}
\left\Vert \nabla f_j \left(\vx,\vy_j\right)\right\Vert &= \left\| \begin{bmatrix} \nabla_{\mU} \| \mU\mV_j^\T - \mY_j \|_F^2 \\ \nabla_{\mV_j} \| \mU\mV_j^\T - \mY_j \|_F^2 \end{bmatrix} \right\|_F \\
&= \left\| \begin{bmatrix} 2 (\mU \mV_j^\T - \mY_j) \mV_j \\ 2 (\mU \mV_j^\T - \mY_j)^\T \mU \end{bmatrix} \right\|_F \\
&\le 2 \left( \|\mU \mV_j^\T \mV_j \|_F + \| \mY_j \mV_j \|_F + \| \mV_j \mU^\T \mU \|_F + \| \mY_j^\T \mU \|_F \right) \\
&\le 2 \left( 8 \rho^3 + 2 \rho \| \mY_j \|_F +  8 \rho^3 + 2 \rho \| \mY_j \|_F \right) \\
&= 32 \rho^3 + 8 \rho \| \mY_j \|_F.
\end{align*}
For $L_{2,j}$, we can bound the Lipschitz constant of $\nabla f_j$ in $B_{2\rho}$ as follows. Denote  $\mD=\begin{bmatrix}\mD_{\mU}\\ \mD_{\mV_j}\end{bmatrix}$. Then
\begin{align*}
&\frac{1}{2}\|\nabla^2 f_j(\mU,\mV_j)\|=\frac{1}{2}\max_{\|\mD\|_F=1}[\nabla^2 f_j(\mU,\mV_j)](\mD,\mD)\\
&=\max_{\|\mD\|_F=1} \|\mD_{\mU}\mV_j^\T + \mU\mD_{\mV_j}^\T\|_F^2+2\lg \mU\mV_j^\T,\mD_{\mU}\mD_{\mV_j}^\T\rg-2\lg \mY_j,\mD_{\mU}\mD_{\mV_j}^\T \rg\\
&\le \max_{\|\mD\|_F=1}
\frac{5}{2}(\|\mV_j\|_F^2+\|\mU\|_F^2)(\|\mD_{\mU}\|_F^2 +\|\mD_{\mV_j}\|_F^2)+\|\mY_j\|_F(\|\mD_{\mU}\|_F^2+\|\mD_{\mV_j}\|_F^2)\\
&\le \max_{\|\mD\|_F=1}
(10\rho^2+\|\mY_j\|_F)(\|\mD_{\mU}\|_F^2+\|\mD_{\mV_j}\|_F^2)=10\rho^2+\|\mY_j\|_F,
\end{align*}
where the last inequality holds because $\| \mU \|_F^2 + \|\mV_j\|_F^2 \le 4\rho^2$. Therefore we can bound  the Lipschitz constant of $\nabla f_j$ as $L_j\le 20\rho^2+2\|\mY_j\|_F$ for all $(\mU,\mV_j)$ such that $\| \mU \|_F^2 + \|\mV_j\|_F^2 \le 4\rho^2$. Now,
\begin{align*}
L_{2,j}+\frac{4L_{1,j}}{\rho}+\frac{\left(2+2\pi\right)L_{0,j}}{\rho^{2}}
&= 20\rho^2+2\|\mY_j\|_F + \frac{4}{\rho} (32 \rho^3 + 8 \rho \| \mY_j \|_F) + \frac{\left(2+2\pi\right)}{\rho^{2}} (32 \rho^4 + 2\| \mY_j \|_F^2) \\
&= 20\rho^2+2\|\mY_j\|_F + 128 \rho^2 + 32 \| \mY_j \|_F  + (64+64\pi) \rho^2 + \frac{(4+4\pi)}{\rho^2} \| \mY_j \|_F^2 \\
&= (212 + 64\pi)\rho^2+34\|\mY_j\|_F  + \frac{(4+4\pi)}{\rho^2} \| \mY_j \|_F^2.
\end{align*}
Thus, choosing $\mu$ to satisfy~\eqref{eq:stepsize requirementballMFfj} ensures that~\eqref{eq:stepsize requirementballfj} is met.

From Theorem~\ref{thm:dgdconvergegenericballfj}, we then conclude that conditioned on observing that $\{\vz(k)\} \subseteq B_\rho$ and all limit points of $\{\vz(k)\}$ are in $B_\rho$, DGD+LOCAL converges to a critical point of the objective function in~\eqref{eq:DGD problem PCA}, and the probability that this critical point is a strict saddle point is zero. We refer to this point as $\vz^\star$.

Next, note that the assumption of Proposition~\ref{prop:min f = min g for DGD} is satisfied if $\mY$ has rank at most $r$. In particular, there exist $\widetilde{\mU}, \widetilde{\mV}$ such that $\widetilde{\mU} \widetilde{\mV}^\T = \mY$ and so we may take $\vx^\star = \vec(\widetilde{\mU})$ and $\vy^\star_j = \vec(\widetilde{\mV}_j)$ to achieve $f_j(\vx^\star,\vy^\star_j) = 0$, which is the smallest possible value for each $f_j$. Proposition~\ref{prop:min f = min g for DGD} thus guarantees that~\eqref{eq:DGD problem PCA} has at least one critical point that is not a strict saddle (and in fact that it is a global minimizer that falls on the consensus subspace).

Next, note that the symmetric property required for Proposition~\ref{prop:nospuriouscritical for DGD} is satisfied. To see this, observe that
\[
\nabla_{\mU} \| \mU \mV_j^\T - \mY_j \|_F^2 = 2 (\mU \mV_j^\T - \mY_j) \mV_j
\]
and
\[
\nabla_{\mV_j} \| \mU \mV_j^\T - \mY_j \|_F^2 = 2 (\mU \mV_j^\T - \mY_j)^\T \mU.
\]
Thus,
\[
\langle \nabla_{\mU} \| \mU \mV_j^\T - \mY_j \|_F^2, \mU \rangle = 2\cdot\text{tr}(U^\T (\mU \mV_j^\T - \mY_j) \mV_j) = 2\cdot\text{tr}(\mV_j^\T (\mU \mV_j^\T - \mY_j)^\T \mU) = \langle \nabla_{\mV_j} \| \mU \mV_j^\T - \mY_j \|_F^2 , \mV_j \rangle.
\]
Proposition~\ref{prop:nospuriouscritical for DGD} thus guarantees that~\eqref{eq:DGD problem PCA} has no critical points outside of the consensus subspace. Since we have argued that DGD+LOCAL converges to a second-order critical point $\vz^\star$ of~\eqref{eq:DGD problem PCA}, it follows that $\vz^\star$ must be on the consensus subspace; that is, $\vz^\star = (\mU^{1\star},\ldots,\mU^{J\star},\mV_1^\star,\ldots,\mV_J^\star)$ with $\mU^{1\star} = \cdots = \mU^{J\star} = \mU^\star$. 

Next, Theorem~\ref{thm:strict saddle preserved for DGD} guarantees that $\vz^\star$ (in which $\mU^{1\star} = \cdots = \mU^{J\star} = \mU^\star$) corresponds to a critical point $(\mU^\star, \mV^\star)$ of the centralized problem~\eqref{eq:rankrBMconsensus}, which is exactly equivalent to problem~\eqref{eq:rankrBM}. Here, $\mV^\star$ is the concatenation of $\mV_1^\star,\ldots,\mV_J^\star$ as in~\eqref{eq:vpart}. Theorem~\ref{thm:centerss} tells us that problem~\eqref{eq:rankrBM} has two types of critical points: global minimizers and strict saddles. If $(\mU^\star,\mV^\star)$ were a strict saddle point of~\eqref{eq:rankrBM}, Theorem~\ref{thm:strict saddle preserved for DGD} tells us that $\vz^\star$ must then be a strict saddle of~\eqref{eq:DGD problem PCA}. However, $\vz^\star$ is almost surely a second-order critical point of~\eqref{eq:DGD problem PCA}, where the Hessian has no negative eigenvalues. It follows that $(\mU^\star,\mV^\star)$ must almost surely be a global minimizer of problem~\eqref{eq:rankrBM}.
\end{proof}

\section*{Acknowledgements}

This work was supported by the DARPA Lagrange Program under ONR/SPAWAR contract N660011824020. The views, opinions and/or findings expressed are those of the author(s) and should not be interpreted as representing the official views or policies of the Department of Defense or the U.S. Government.

The authors gratefully acknowledge Waheed Bajwa, Haroon Raja, Clement Royer, and Stephen Wright for many informative discussions on nonconvex and distributed optimization.

\appendix

\section{Proof of Proposition~\ref{prop:lip}}
\label{sec:prooflip}

\begin{proof}
Let $L = \max_j L_j$ and
\[
\vdelta_{\vz} = (\vdelta_{\vx^1},\ldots,\vdelta_{\vx^J},\vdelta_{\vy_1},\ldots,\vdelta_{\vy_J}).
\]
First, for any $\vz$ and $\vdelta_{\vz}$, and using the symmetry of $\mW=\{w_{ij}\}$, we have
\begin{align*}
\nabla g(\vz + \vdelta_{\vz}) - \nabla g(\vz)
&=
\begin{bmatrix}
\nabla_{\vx} f_1(\vx^1+\vdelta_{\vx^1},\vy_1+\vdelta_{\vy_1})-\nabla_{\vx} f_1(\vx^1,\vy_1)+ 4\sum_{i=1}^J w_{1i}(\vdelta_{\vx^1}-\vdelta_{\vx^i})
\\
\vdots
\\
\nabla_{\vx} f_J(\vx^J+\vdelta_{\vx^J},\vy_J+\vdelta_{\vy_J})-\nabla_{\vx} f_J(\vx^J,\vy_J)+ 4\sum_{i=1}^J w_{Ji}(\vdelta_{\vx^J}-\vdelta_{\vx^i})
\\
\nabla_{\vy} f_1(\vx^1+\vdelta_{\vx^1},\vy_1+\vdelta_{\vy_1})-\nabla_{\vy} f_1(\vx^1,\vy_1)
\\
\vdots
\\
\nabla_{\vy} f_J(\vx^J+\vdelta_{\vx^J},\vy_J+\vdelta_{\vy_J})-
\nabla_{\vy} f_J(\vx^J,\vy_J)
\end{bmatrix}
\end{align*}
Then with some rearrangement, denoting $\nabla f_j=\nabla_{\left[\begin{smallmatrix}
\vx \\ \vy
\end{smallmatrix}\right]}f_j$ and using the triangle inequality, we can obtain
\begin{align*}
\left\| \nabla g(\vz + \vdelta_{\vz}) - \nabla g(\vz) \right\|_2
&\leq  \left\|\begin{bmatrix}\nabla f_1(\vx^1+ \vdelta_{\vx^1},\vy_1 + \vdelta_{\vy_1}) - \nabla f_1(\vx^1,\vy_1) \\ \vdots \\ \nabla f_J(\vx^J+ \vdelta_{\vx^J},\vy_J + \vdelta_{\vy_J}) - \nabla f_J(\vx^J,\vy_J) \end{bmatrix}\right\|_2 +4 \left\|\begin{bmatrix} \sum_{i=1}^J w_{1i}(\vdelta_{\vx^1}-\vdelta_{\vx^i}) \\ \vdots\\ \sum_{i=1}^{J} w_{Ji}(\vdelta_{\vx^J}-\vdelta_{\vx^i})\end{bmatrix} \right\|_2\\
& \leq \sqrt{\sum_{j=1}^J  L_{j}^2\left\|\left[\begin{smallmatrix}
\vdelta_{\vx^j} \\ \vdelta_{\vy_j}
\end{smallmatrix}\right]\right\|_2^2} + 4\sqrt{\sum_{j=1}^J \left(\sum_{i=1}^J w_{ji}\right)^2 \left\|\vdelta_{\vx^j}\right\|_2^2}
+4\sqrt{\sum_{j=1}^J  \left\|\sum_{i= 1}^J w_{ji}\vdelta_{\vx^i}\right\|_2^2}\\
& \leq L \|\vdelta_{\vz}\|_F + \left(4\max_{j}\sum_{i=1}^J w_{ji}\right)\left\|\begin{bmatrix}
\vdelta_{\vx^1}&\cdots&\vdelta_{\vx^1}
\end{bmatrix} \right \|_F+4 \left(\max_{j}\sum_{i=1}^J   w_{ji} \right) \left\|\begin{bmatrix}
\vdelta_{\vx^1}&\cdots&\vdelta_{\vx^1}
\end{bmatrix} \right \|_F.
\end{align*}
where in the last line we use
\begin{align*}
\sqrt{\sum_{j=1}^J  \left\|\sum_{i= j}^J w_{ji}\vdelta_{\vx^i}\right\|_2^2}=\left\|\begin{bmatrix}
\vdelta_{\vx^1}&\cdots&\vdelta_{\vx^1}
\end{bmatrix}\mW\right\|_F
&=\left\|\mW^\T\begin{bmatrix}
\vdelta_{\vx^1}&\cdots&\vdelta_{\vx^1}
\end{bmatrix}^\T\right\|_F
\\
&\leq   \|\mW\| \left\|\begin{bmatrix}
\vdelta_{\vx^1}&\cdots&\vdelta_{\vx^1}
\end{bmatrix} \right \|_F
\\ &
\leq \left(\max_{j}\sum_{i=1}^J   w_{ji} \right) \left\|\begin{bmatrix}
\vdelta_{\vx^1}&\cdots&\vdelta_{\vx^1}
\end{bmatrix} \right \|_F
\end{align*}
since
$\|\mW\|\le \max_{j}\sum_{i\neq j}w_{ji}= \max_{j}\sum_{i=1}^Jw_{ji}$
in view of that $\mW$ is symmetric, $w_{ii}=0$ and $w_{ij}\ge 0$ by \eqref{eq:wtildetow}.

Finally, using the definition of $w_{ji}$  \eqref{eq:wtildetow}, we have $\max_{j}\sum_{i=1}^J   w_{ji}=\max_{j}\sum_{i\neq j}^J   w_{ji}=\max_j\frac{\sum_{i\neq j }^J  \widetilde w_{ji}}{4 \mu}=: \frac{\omega}{4\mu}$, and further by the inequality $\left\|\begin{bmatrix}
\vdelta_{\vx^1}&\cdots&\vdelta_{\vx^1}
\end{bmatrix} \right \|_F\le \|\vdelta_Z\|_F,$
we obtain that $\nabla g$ is Lipschitz continuous with constant
\[
L_{g} = L +  4  \left(\frac{\omega}{4\mu}\right)+4  \left(\frac{\omega}{4\mu}\right)= L + \frac{2\omega}{\mu}.
\]
\end{proof}

\section{Proof of Theorem~\ref{thm:boundedgd}}
\label{sec:proofboundedgd}

The proof involves constructing a function $\widetilde{h}$ such that $\widetilde{h}(\vx) = h(\vx)$ for all $\vx \in B_{\rho}$ but where $\widetilde{h}$ has a globally Lipschitz gradient.

To do this, first define a window function $w:\mathbb{R}^{n}\rightarrow\mathbb{R}$,
\[
w\left(\vx\right)=\begin{cases}
1, & \left\Vert \vx\right\Vert \leq\rho\\
2-\frac{\left\Vert \vx\right\Vert }{\rho}+\frac{1}{2\pi}\sin\left(\frac{2\pi\left\Vert \vx\right\Vert }{\rho}\right), & \rho<\left\Vert \vx\right\Vert <2\rho\\
0, & \left\Vert \vx\right\Vert \geq2\rho,
\end{cases}
\]
where $\left\Vert \cdot\right\Vert =\left\Vert \cdot\right\Vert_{2}$. Note also that \[
\nabla w\left(\vx\right)=\begin{cases}
0, & \left\Vert \vx\right\Vert \leq\rho\\
-\frac{2\vx}{\rho\left\Vert \vx\right\Vert }\sin^{2}\left(\frac{\pi\left\Vert \vx\right\Vert }{\rho}\right), & \rho<\left\Vert \vx\right\Vert <2\rho\\
0, & \left\Vert \vx\right\Vert \geq2\rho
\end{cases}
\]
and
\[
\nabla^{2}w\left(\vx\right)=\begin{cases}
0, & \left\Vert \vx\right\Vert \leq\rho\\
\left(\frac{2}{\rho\left\Vert \vx\right\Vert ^{3}}\sin\left(\frac{\pi\left\Vert \vx\right\Vert }{\rho}\right)-\frac{2\text{\ensuremath{\pi}}}{\rho^{2}\left\Vert \vx\right\Vert ^{2}}\sin\left(\frac{2\pi\left\Vert \vx\right\Vert }{\rho}\right)\right)\vx\vx^{\T}, & \rho<\left\Vert \vx\right\Vert <2\rho\\
0, & \left\Vert \vx\right\Vert \geq2\rho
\end{cases}
\]
since
\[
\frac{\partial^{2}w\left(\vx\right)}{\partial x_{i}\partial x_{j}}=\frac{2x_{i}x_{j}}{\rho\left\Vert \vx\right\Vert ^{3}}\sin\left(\frac{\pi\left\Vert \vx\right\Vert }{\rho}\right)-\frac{2\pi x_{i}x_{j}}{\rho^{2}\left\Vert \vx\right\Vert ^{2}}\sin\left(\frac{2\pi\left\Vert \vx\right\Vert }{\rho}\right).
\]
It is easy to verify that $w\in\mathcal{C}^{2}$ and $\left|w\left(\vx\right)\right|\leq1$. To bound the gradient $\nabla w$, we have
\[
\left\Vert \nabla w\right\Vert =\left\Vert -\frac{2\vx}{\rho\left\Vert \vx\right\Vert }\sin^{2}\left(\frac{\pi\left\Vert \vx\right\Vert }{\rho}\right)\right\Vert \leq\frac{2}{\rho}.
\]
For the Hessian $\nabla^{2}w$, we have
\[
\left\Vert \nabla^{2}w\right\Vert \leq\left\Vert \left(\frac{2}{\rho\left\Vert \vx\right\Vert ^{3}}\sin\left(\frac{\pi\left\Vert \vx\right\Vert }{\rho}\right)-\frac{2\pi}{\rho^{2}\left\Vert \vx\right\Vert ^{2}}\sin\left(\frac{2\pi\left\Vert \vx\right\Vert }{\rho}\right)\right)\vx^{\T}\vx\right\Vert \leq\frac{2+2\pi}{\rho^{2}}.
\]

Now, define
\[
\widetilde{h}\left(\vx\right)=h\left(\vx\right)w\left(\vx\right)=\begin{cases}
h\left(\vx\right), & \left\Vert \vx\right\Vert \leq\rho\\
h\left(\vx\right)\left(2-\frac{\left\Vert \vx\right\Vert }{\rho}+\frac{1}{2\pi}\sin\left(\frac{2\pi\left\Vert \vx\right\Vert }{\rho}\right)\right), & \rho<\left\Vert \vx\right\Vert <2\rho\\
0, & \left\Vert \vx\right\Vert \geq2\rho.
\end{cases}
\]
We have the following properties for $\widetilde{h}$:
\begin{itemize}
\item Since $h = \widetilde{h}$ in $B_\rho$, $\widetilde{h}$ satisfies the {\L}ojasiewicz inequality in $B_\rho$.
\item Since $h,w \in C^2$, $\widetilde{h} \in C^2$.
\item Since $\inf_{\R^n} h>-\infty$ and $\inf_{\R^n} w>-\infty$, $\inf_{\R^n} \widetilde{h}>-\infty$.
\item To globally bound the Lipschitz constant of the gradient of $\widetilde{h}$, note that
\begin{eqnarray*}
\left\Vert \nabla^{2}\widetilde{h}\right\Vert  & = & \left\Vert w\cdot\nabla^{2}h+\nabla h \cdot \left(\nabla w\right)^{\T}+\nabla w \cdot \left(\nabla h\right)^{\T}+h\cdot\nabla^{2}w\right\Vert \\
 & \leq & \left| w\right| \left\Vert \nabla^{2}h\right\Vert +2\left\Vert \nabla w\right\Vert \left\Vert \nabla h\right\Vert +\left| h\right| \left\Vert \nabla^{2}w\right\Vert \\
 & \leq & L_{2}+\frac{4L_{1}}{\rho}+\frac{\left(2+2\pi\right)L_{0}}{\rho^{2}}.
\end{eqnarray*}
\end{itemize}

Now consider the gradient descent algorithm with stepsize $\mu$ satisfying~\eqref{eq:steptilde}. Define
\begin{align*}
T_h = \{\vx(0) \in B_\rho:&~\text{all}~\{\vx(k)\} \subseteq B_\rho~\text{and all limit points of}~\{\vx(k)\}~\text{are in}~B_\rho\\
&~\text{when gradient descent is run on}~h~\text{starting at}~\vx(0)\}
\end{align*}
and
\begin{align*}
T_{\widetilde{h}} = \{\vx(0) \in B_\rho:&~\text{all}~\{\vx(k)\} \subseteq B_\rho~\text{and all limit points of}~\{\vx(k)\}~\text{are in}~B_\rho\\
&~\text{when gradient descent is run on}~\widetilde{h}~\text{starting at}~\vx(0)\}.
\end{align*}

Similarly, define
\[
\Sigma_h = \{\vx(0) \in B_\rho:~\{\vx(k)\}~\text{converges to a strict saddle when gradient descent is run on}~h~\text{starting at}~\vx(0)\}
\]
and
\[
\Sigma_{\widetilde{h}} = \{\vx(0) \in B_\rho:~\{\vx(k)\}~\text{converges to a strict saddle when gradient descent is run on}~\widetilde{h}~\text{starting at}~\vx(0)\}.
\]
Using the above properties, we see that Theorem~\ref{thm:avoid saddle gd} can be applied to $\widetilde{h}$, and so we conclude that $\Sigma_{\widetilde{h}}$ has measure zero.

Now, after running gradient descent on $h$ from a random initialization as in the theorem statement, condition on observing that $\{\vx(k)\} \subseteq B_\rho$ and all limit points of $\{\vx(k)\}$ are in $B_\rho$, i.e., that $\vx(0) \in T_h$. Because $\{\vx(k)\} \subseteq B_\rho$ and all limit points of $\{\vx(k)\}$ are in $B_\rho$, and because $\{\vx(k)\}$ matches the sequence that would be obtained by running gradient descent on $\widetilde{h}$, we can apply Theorem~\ref{thm:convergence gd with KL in ball} to conclude that $\{\vx(k)\}$ converges to a critical point of $\widetilde{h}$, and since this critical point belongs to $B_\rho$ and $\widetilde{h} = h$ inside $B_\rho$, we conclude that this is also a critical point of $h$.

Finally, using the definition of conditional probability, we have
\begin{align*}
P(\vx(0) \in \Sigma_h | \vx(0) \in T_h)
&= \frac{P( \vx(0) \in \Sigma_h \cap T_h )}{P(\vx(0) \in T_h)} \\
&= \frac{P( \vx(0) \in \Sigma_{\widetilde{h}} \cap T_{\widetilde{h}} )}{P(\vx(0) \in T_h)},
\end{align*}
where the second equality follows from the fact that $\widetilde{h} = h$ inside $B_\rho$: if a sequence of iterations stays bounded inside $B_\rho$ and converges to a strict saddle when gradient descent is run on $h$, the same will hold when gradient descent is run on $\widetilde{h}$, and vice versa. Since $\Sigma_{\widetilde{h}}$ has zero measure and because $\vx(0)$ is chosen randomly from a probability distribution supported on a set $S \subseteq B_\rho$ with $S$ having positive measure, $P( \vx(0) \in \Sigma_{\widetilde{h}} \cap T_{\widetilde{h}} ) = 0$. Also, by assumption, $P(\vx(0) \in T_h) > 0$. Therefore, $P(\vx(0) \in \Sigma_h | \vx(0) \in T_h) = \frac{0}{\text{nonzero}} = 0$.

\section{Proof of Proposition~\ref{prop:min f = min g for DGD}}
\label{sec:prooffgdgd}

\begin{proof}
	First note that
	\begin{align}
	&\min_{\vz}g(\vz) = \sum_{j=1}^J \left(f_j(\vx^j,\vy_j) + \sum_{i=1}^J w_{ji} \|\vx^j - \vx^i\|_2^2\right) \geq \sum_{j=1}^J \min_{\vx^j,\vy_j} f_j(\vx^j,\vy_j) = \sum_{j=1}^J f_j(\vx^\star,\vy_j^\star) = \min_{\vx,\vy} f(\vx,\vy).
	\label{eq:g>=fdgd}\end{align}
	On the other hand, we have
	\begin{align*}
	\min_{\vz}g(\vz) &= \min_{\vz}\sum_{j=1}^J \left(f_j(\vx^j,\vy_j) + \sum_{i=1}^J w_{ji} \|\vx^j - \vx^i\|_2^2\right)\\
	&\leq \min_{\vz:\vx^1=\cdots = \vx^J}\sum_{j=1}^J \left(f_j(\vx^j,\vy_j) + \sum_{i=1}^J w_{ji} \|\vx^j - \vx^i\|_2^2\right)  \\
	&= \min_{\vx,\vy}\sum_{j=1}^J f_j(\vx,\vy_j)  = \min_{\vx,\vy} f(\vx,\vy).
	\end{align*}
	Thus, we have
	\[
	\min_{\vz} g(\vz) = \min_{\vx,\vy} f(\vx,\vy).
	\]
	The proof is completed by noting that \eqref{eq:g>=fdgd} achieves the equality only at $\vz$ with $\vx^1 = \cdots = \vx^J$ since the topology defined by $\mW$ is connected.
\end{proof}

\section{Proof of Proposition~\ref{prop:nospuriouscritical for DGD}}
\label{sec:proofnscdgd}

\begin{proof}
	The critical points of the objective function in \eqref{eq:DGD problem} satisfy
	\begin{align}
	&\nabla_{\vx^j} g(\vz) = \nabla_{\vx}f_j(\vx^j,\vy_j) + \sum_{i=1}^J 2w_{ji}(\vx^j - \vx^i) = \mzero,	\label{eq:gradient g wrt xjdgd}	\\
	&\nabla_{\vy^j} g(\vz) = \nabla_{\vy_j}f_j(\vx^j,\vy_j) = \mzero, \forall \ j \in [J].
	\label{eq:gradient g wrt ydgd}	\end{align}
	
	Now taking the inner product of both sides in \eqref{eq:gradient g wrt xjdgd} with $\vx^j$ and also the inner product of both sides in \eqref{eq:gradient g wrt ydgd} with $\vy^j$ and using the property \eqref{eq:symmetric property for gradientdgd}, we have
	\[
\sum_{i=1}^J 2w_{ji}\langle \vx^j,  \vx^j - \vx^i\rangle = 0
	\]	
	for all $j \in [J]$. Using the symmetric property of $\mW$, we then have
	\[
\sum_{j=1}^J\sum_{i=1}^J w_{ji} \|\vx^j - \vx^i\|^2 = 0.
	\]
 Therefore,
\[
\vx^i = \vx^j, \ \text{if}\ w_{ij}\neq 0
\]
for any $i,j\in [J]$. Since the topology defined by $\mW$ is connected, we finally have
\[
\vx^1 = \cdots = \vx^J.
\]

\end{proof}

\section{Proof of Theorem~\ref{thm:strict saddle preserved for DGD}}
\label{sec:proofsspdgd}

\begin{proof} We rewrite $\setC_f$ as:
\begin{align*}
\setC_f = \left\{\vx,\vy: \sum_{j=1}^J \nabla_{\vx}f_j(\vx,\vy_j) = \vzero, \nabla_{\vy_j}f_j(\vx,\vy_j) = \vzero, \forall j\in[J] \right\}.
\end{align*}

The critical points of the objective function in \eqref{eq:DGD problem} satisfy
\begin{equation}\begin{split}
&\nabla_{\vx^j} g(\vz) = \nabla_{\vx}f_j(\vx^j,\vy_j) + \sum_{i=1}^J 2 (w_{ij} + w_{ji}) (\vx^j - \vx^i) = \mzero,\\
&\nabla_{\vy^j} g(\vz) = \nabla_{\vy_j}f_j(\vx^j,\vy_j) = \mzero, \forall \ j \in [J].
\end{split}
\nonumber\end{equation}
With this, we rewrite $\setC_g$ as
\begin{align*}
\setC_g = \bigg\{\vz: \nabla_{\vx}f_j(\vx^j,\vy_j) + \sum_{i=1}^J 2 (w_{ij} + w_{ji}) (\vx^j - \vx^i) = \mzero,  \nabla_{\vy_j}f_j(\vx^j,\vy_j) = \mzero, \forall \ j \in [J]\bigg\}.
\end{align*}
Thus, for any $\vz = (\vx^1,\ldots,\vx^J,\vy)\in\setC_g$ with $\vx^1 = \cdots = \vx^J = \vx$, we have that $ (\vx,\vy)$  is a critical point of \eqref{eq:centeralized problem}, i.e., $(\vx,\vy)\in\setC_f$. In what follows, we check how the Hessian information (especially the smallest eigenvalue of the Hessian) of $(\vx,\vy)$ is transformed to $\vz$.

At any point $(\vx,\vy)$, the Hessian quadratic form of $f$ for any $\vq_{\vx}$ and $\vq_{\vy} = \begin{bmatrix}\vq_{\vy_1}^\T & \cdots & \vq_{\vy_J}^\T\end{bmatrix}^\T$ is given by
\[
[\nabla^2 f(\vx,\vy)](\begin{bmatrix}\vq_{\vx}\\ \vq_{\vy} \end{bmatrix},\begin{bmatrix}\vq_{\vx}\\ \vq_{\vy} \end{bmatrix}) = \sum_{j=1}^J \nabla^2 f_j(\begin{bmatrix}\vq_{\vx}\\ \vq_{\vy_j} \end{bmatrix},\begin{bmatrix}\vq_{\vx}\\ \vq_{\vy_j} \end{bmatrix}).
\]
At any point $\vz$, the Hessian quadratic form of $g$ for any $\vq = \begin{bmatrix} \vq_{\vx^1}^\T & \cdots & \vq_{\vx^J}^\T & \vq_{\vy_1}^\T & \cdots & \vq_{\vy_J}^\T \end{bmatrix}$ is given by
\[
[\nabla^2 g(\vz)](\vq,\vq) = \sum_{j=1}^J \nabla^2 f_j(\begin{bmatrix}\vq_{\vx^j}\\ \vq_{\vy_j} \end{bmatrix},\begin{bmatrix}\vq_{\vx^j}\\ \vq_{\vy_j} \end{bmatrix}) + \sum_{j=1}^J 2 w_{ji} \|\vq_{\vx^j} - \vq_{\vx^i}\|^2_2.
\]

Now suppose $\lambda_{\min}(\nabla^2 f(\vx,\vy))<0$ (where $\lambda_{\min}$ denotes the smallest eigenvalue), i.e., there exist $\vq_{\vx},\vq_{\vy}$ such that $[\nabla^2 f(\vx,\vy)](\begin{bmatrix}\vq_{\vx}\\ \vq_{\vy} \end{bmatrix},\begin{bmatrix}\vq_{\vx}\\ \vq_{\vy} \end{bmatrix})<0$. Choosing $\vq_{\vx^1} = \cdots = \vq_{\vx^J}  = \vq_{\vx}$, we have $[\nabla^2 g(\vz)](\vq,\vq) <0$, i.e., $\lambda_{\min}(\nabla^2 g(\vz))<0$.
\end{proof}

\section{Proof of Theorem~\ref{thm:centerss}}
\label{sec:geometry for pca}
Denote by $h(\mU,\mV) = \frac{1}{2}\| \mU\mV^\T - \mY \|_F^2$. Let $\calC$ denote the set of critical points of $h$:
\[
\calC = \left\{(\mU,\mV): (\mU\mV^\T - \mY)\mV = \vzero, \ (\mU\mV^\T - \mY)^\T\mU = \vzero  \right\}.
\]
Our goal is to characterize the behavior of all the critical points that are not global minima. In particular, we want to show that every critical point of $h$ is either a global minimum or a strict saddle. Towards that end, we first recall the following result concerning the degenerate critical points.
\begin{lem}\cite[Theorem 8 with $\mX=\mId$]{nouiehed2018learning}
	Any pair $(\mU,\mV)\in\calC$ that is degenerate (i.e., $\rank(\mU\mV^\T)<r$) is either a global minimum of $h$ (i.e., $\mU\mV^\T = \mY_r$ where $\mY_r$ is a rank-$r$ approximation of $\mY$)  or a strict saddle (i.e., $\lambda_{\min}(\nabla^2 h(\mU,\mV))<0$).
	\label{lem:degenerate case}\end{lem}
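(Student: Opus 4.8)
The plan is to split on whether the residual $\mE := \mU\mV^\T - \mY$ vanishes, and to extract a negative-curvature direction in the nonzero case. First I would record the Hessian quadratic form of $h(\mU,\mV) = \frac12\|\mU\mV^\T - \mY\|_F^2$: for a direction $\mD = (\mD_{\mU},\mD_{\mV})$,
\[
[\nabla^2 h(\mU,\mV)](\mD,\mD) = \|\mD_{\mU}\mV^\T + \mU\mD_{\mV}^\T\|_F^2 + 2\langle \mE, \mD_{\mU}\mD_{\mV}^\T\rangle.
\]
The key structural observation is that degeneracy, $\rank(\mU\mV^\T) < r$, forces at least one of $\mU,\mV$ to be rank-deficient: if both had full column rank $r$, then $\mU\mV^\T$ would have rank $r$. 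Hence there is a unit vector $\vc \in \R^r$ with $\mV\vc = \vzero$ or $\mU\vc = \vzero$; I would assume the former, the latter being symmetric under transposing the problem to $\|\mV\mU^\T - \mY^\T\|_F^2$.

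If $\mE = \vzero$ then $\mU\mV^\T = \mY$, so $h = 0$ attains the global minimum value; moreover $\rank\mY = \rank(\mU\mV^\T) < r$ forces $\mY_r = \mY$, so $\mU\mV^\T = \mY_r$ and this is exactly the degenerate global-minimum case of the statement. The substance of the lemma is therefore the claim that every degenerate critical point with $\mE \ne \vzero$ is a strict saddle, which I would establish by exhibiting $\mD$ with $[\nabla^2 h](\mD,\mD) < 0$.

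For the construction, let $(\tau_1,\vf_1,\vg_1)$ be a top singular triple of $\mE$, so that $\mE\vg_1 = \tau_1\vf_1$, $\|\vf_1\| = \|\vg_1\| = 1$, and $\tau_1 = \|\mE\|_2 > 0$. With the null vector $\vc$ ($\mV\vc = \vzero$), set $\mD_{\mU} = \alpha\vf_1\vc^\T$ and $\mD_{\mV} = -\alpha^{-1}\vg_1\vc^\T$ for a scalar $\alpha > 0$. The first-order term collapses because $\mV\vc = \vzero$:
\[
\mD_{\mU}\mV^\T + \mU\mD_{\mV}^\T = \alpha\vf_1(\mV\vc)^\T - \alpha^{-1}(\mU\vc)\vg_1^\T = -\alpha^{-1}(\mU\vc)\vg_1^\T,
\]
while $\mD_{\mU}\mD_{\mV}^\T = -\vf_1\vg_1^\T$ gives $2\langle \mE,\mD_{\mU}\mD_{\mV}^\T\rangle = -2\vf_1^\T\mE\vg_1 = -2\tau_1$. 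Hence
\[
[\nabla^2 h](\mD,\mD) = \alpha^{-2}\|\mU\vc\|^2 - 2\tau_1,
\]
which is strictly negative once $\alpha$ is taken large enough, so $\lambda_{\min}(\nabla^2 h(\mU,\mV)) < 0$ and the point is a strict saddle.

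The main obstacle is controlling the positive first-order term $\|\mD_{\mU}\mV^\T + \mU\mD_{\mV}^\T\|_F^2$ so that it does not swamp the negative cross term $-2\tau_1$. Choosing $\vc$ in the null space of the rank-deficient factor makes one of the two first-order contributions vanish identically, and the asymmetric scaling by $\alpha$ then drives the surviving contribution to zero while holding the cross term fixed at $-2\tau_1$. I note that at a critical point one additionally has $\mU^\T\vf_1 = \vzero$ and $\mV^\T\vg_1 = \vzero$ (from $\mE^\T\mU = \vzero$ and $\mE\mV = \vzero$), which yields an alternative, fully symmetric construction via Pythagoras; but the null-space choice above is cleaner and requires no case analysis beyond identifying which of $\mU,\mV$ is rank-deficient.
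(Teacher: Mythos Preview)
Your argument is correct. The Hessian formula is right, the case split on $\mE=\vzero$ versus $\mE\neq\vzero$ is exhaustive (since for a degenerate point $\mU\mV^\T=\mY_r$ forces $\rank(\mY)<r$ and hence $\mY_r=\mY$), and the rank-one construction $\mD_{\mU}=\alpha\vf_1\vc^\T$, $\mD_{\mV}=-\alpha^{-1}\vg_1\vc^\T$ with $\vc$ annihilating the rank-deficient factor cleanly yields $[\nabla^2 h](\mD,\mD)=\alpha^{-2}\|\mU\vc\|^2-2\tau_1<0$ for large $\alpha$.

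As for comparison with the paper: the paper does \emph{not} prove this lemma. It is quoted verbatim as a known result from \cite[Theorem~8 with $\mX=\mId$]{nouiehed2018learning} and used as a black box; the paper's own contribution in that appendix is the non-degenerate case (\Cref{lem:non-degenerate case}), handled by reducing to a balanced critical point via the polar-type factorization $\widetilde\mU=\mP\mSigma^{1/2}$, $\widetilde\mV=\mQ\mSigma^{1/2}$. So your write-up supplies a self-contained proof where the paper simply cites one. Your approach is the standard ``escape along a top singular direction of the residual, placed in the null column of the deficient factor'' argument; it is elementary and does not rely on any balancing trick, which is natural since the degenerate case is precisely where an exact null direction in $\R^r$ is available to kill the positive first-order term.

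One small remark on your closing aside: the ``fully symmetric construction via Pythagoras'' using only $\mU^\T\vf_1=\vzero$ and $\mV^\T\vg_1=\vzero$ (and an arbitrary unit $\vc$) gives $\|\mD_{\mU}\mV^\T+\mU\mD_{\mV}^\T\|_F^2=\|\mV\vc\|^2+\|\mU\vc\|^2$, which is not obviously dominated by $2\tau_1$ without further work, so that alternative is not as immediate as you suggest. This does not affect your main argument, which stands on its own.
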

Note that the above result holds for any matrix $\mY$. When $\rank(\mY)\leq r$, then $\mY_r = \mY$.  It follows from \Cref{lem:degenerate case} that the behavior of all degenerate critical points is quite clear. For the remaining non-degenerate critical points, using the same argument in~\cite[Theorems 2--4]{zhu2017global}, we first establish the following results concerning the critical points that are also balanced (i.e., $\mU^\T\mU = \mV^\T\mV$).

\begin{lem}\cite[Theorems 2--4]{zhu2017global}
Any pair $(\mU,\mV)\in\calC$ satisfying $\mU^\T\mU = \mV^\T\mV$ is either a global minimum of $h$ or a strict saddle.
\label{lem:balanced case}\end{lem}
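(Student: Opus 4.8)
The plan is to combine the already-established degenerate case with a direct negative-curvature construction for the non-degenerate balanced critical points. Since Lemma~\ref{lem:degenerate case} already classifies every degenerate critical point (regardless of balancedness) as a global minimum or a strict saddle, I would restrict attention to a balanced critical point $(\mU,\mV)\in\calC$ with $\rank(\mU\mV^\T)=r$. In this case $\mU$ and $\mV$ both have full column rank and $\mS:=\mU^\T\mU=\mV^\T\mV\succ\vzero$. The goal is then to show that any such critical point that is not a global minimizer admits a direction of strictly negative curvature, hence is a strict saddle.

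The first main step is a structural characterization: I would show that at a balanced critical point the product $\mU\mV^\T$ is a partial singular value decomposition of $\mY$. Writing the compact SVD $\mU\mV^\T=\mL\mSigma\mR^\T$, balancedness forces the shared-factor form $\mU=\mL\mSigma^{1/2}\mQ$ and $\mV=\mR\mSigma^{1/2}\mQ$ for some orthogonal $\mQ$ (because full column rank gives $\Range(\mU)=\Range(\mL)$ and $\Range(\mV)=\Range(\mR)$, and the identity $\mU^\T\mU=\mV^\T\mV$ pins down the remaining factor up to a common orthogonal matrix). Substituting this into the critical-point equations $\mE\mV=\vzero$ and $\mE^\T\mU=\vzero$, with $\mE:=\mU\mV^\T-\mY$, collapses them to $\mY\mR=\mL\mSigma$ and $\mY^\T\mL=\mR\mSigma$, which say precisely that the columns of $\mL$ and $\mR$ are matched left/right singular vectors of $\mY$ and the diagonal entries of $\mSigma$ are the corresponding singular values. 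Hence $\mU\mV^\T=\sum_{i\in I}\tau_i\vl_i\vr_i^\T$ for some index set $I$ of size $r$ drawn from the singular triples $(\tau_k,\vl_k,\vr_k)$ of $\mY$, and the objective equals $h(\mU,\mV)=\tfrac12\sum_{k\notin I}\tau_k^2$. By Eckart--Young this is a global minimum exactly when $I$ indexes the $r$ largest singular values.

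The second step builds the negative-curvature direction when $I$ is not the top-$r$ set. A swapping argument then supplies a chosen index $p\in I$ and an unchosen index $q\notin I$ with $\tau_q>\tau_p$. I would test the Hessian quadratic form of $h$, namely $[\nabla^2 h](\mD,\mD)=\|\mD_{\mU}\mV^\T+\mU\mD_{\mV}^\T\|_F^2+2\lg\mE,\mD_{\mU}\mD_{\mV}^\T\rg$, on the rank-one perturbation $\mD_{\mU}=\vl_q\ve^\T$, $\mD_{\mV}=\vr_q\ve^\T$, where $\ve$ is a unit eigenvector of $\mS$ with eigenvalue $\tau_p$. Because $\vl_q\perp\Range(\mU)$ and $\vr_q\perp\Range(\mV)$, the cross terms vanish, balancedness gives $\|\mU\ve\|_2^2=\|\mV\ve\|_2^2=\ve^\T\mS\ve=\tau_p$, and $\vl_q^\T\mE\vr_q=-\vl_q^\T\mY\vr_q=-\tau_q$. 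The quadratic form therefore evaluates to $2\tau_p-2\tau_q=2(\tau_p-\tau_q)<0$, exhibiting a strict saddle.

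I expect the structural characterization---establishing the shared-factor form from balancedness and verifying that the critical-point equations force the factors to be genuine singular vectors of $\mY$---to be the main obstacle, together with the bookkeeping needed to handle repeated singular values (choosing an orthonormal singular basis so that the unused directions $\vl_q,\vr_q$ are orthogonal to $\Range(\mU)$ and $\Range(\mV)$) and the swapping argument that guarantees the strict gap $\tau_q>\tau_p$. Once the direction is chosen, the Hessian evaluation is routine. Combining the non-degenerate analysis above with Lemma~\ref{lem:degenerate case} then yields the claim.
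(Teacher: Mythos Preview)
Your proposal is correct. The paper itself does not supply a proof of this lemma; it simply imports the result from \cite[Theorems~2--4]{zhu2017global}. What you have written is a valid self-contained argument, and it follows the same essential strategy as the cited reference: first dispose of the degenerate case via Lemma~\ref{lem:degenerate case}, then show that any non-degenerate balanced critical point has $\mU\mV^\T$ equal to a partial SVD of $\mY$, and finally exhibit a negative-curvature direction by swapping in an unused singular triple with a strictly larger singular value.

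A few details worth tightening when you write it out in full. The derivation of the shared-factor form $\mU=\mL\mSigma^{1/2}\mQ$, $\mV=\mR\mSigma^{1/2}\mQ$ goes through cleanly once you write $\mU=\mL\mA$, $\mV=\mR\mB$ and observe that $\mA\mB^\T=\mSigma$ together with $\mA^\T\mA=\mB^\T\mB$ forces $(\mA\mA^\T)^2=\mSigma^2$, hence $\mA\mA^\T=\mSigma$ by uniqueness of the PSD square root. For the repeated-singular-value bookkeeping, the cleanest route is to note that $\mY\mR=\mL\mSigma$ and $\mY^\T\mL=\mR\mSigma$ say exactly that the columns of $\mL$ and $\mR$ span invariant singular subspaces of $\mY$, so you may complete $(\mL,\mR)$ to a full orthonormal singular basis in which the remaining vectors $\vl_q,\vr_q$ are automatically orthogonal to $\Range(\mU)$ and $\Range(\mV)$. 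Finally, the strict inequality $\tau_q>\tau_p$ genuinely requires that $(\mU,\mV)$ is not a global minimizer, so that the selected multiset of singular values differs from the top-$r$ multiset; your swapping argument is the right way to extract this.

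In short: the paper outsources this lemma, and your direct argument reproduces the content of the cited theorems with no gaps.
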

The above result also holds for any matrix $\mY$. With this result, we now show that non-degenerate critical points behave similarly to degenerate ones.

\begin{lem}
Any pair $(\mU,\mV)\in\calC$ that is non-degenerate (i.e., $\rank(\mU\mV^\T)=r$) is either a global minimum of $h$ or a strict saddle.
\label{lem:non-degenerate case}\end{lem}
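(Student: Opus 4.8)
The plan is to reduce the non-degenerate case to the already-resolved balanced case in Lemma~\ref{lem:balanced case} by exploiting the $GL(r)$ symmetry of $h$. For any invertible $\mR \in \R^{r\times r}$, the map $\Phi_{\mR}(\mU,\mV) = (\mU\mR,\, \mV\mR^{-\T})$ leaves the product $\mU\mV^\T$ unchanged, so $h\circ\Phi_{\mR} = h$. The idea is to move a given non-degenerate critical point along its orbit to a \emph{balanced} representative, apply Lemma~\ref{lem:balanced case} there, and then transfer the conclusion back along the orbit.

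First I would observe that $\rank(\mU\mV^\T)=r$ forces both $\mU$ and $\mV$ to have full column rank, since $\rank(\mU\mV^\T)\le\min(\rank\mU,\rank\mV)\le r$. Consequently $\Range(\mU)=\Range(\mU\mV^\T)$ and similarly $\Range(\mV)=\Range((\mU\mV^\T)^\T)$, so $\mU$ and $\mV$ inherit the column and row spaces of the product. Taking a compact SVD $\mU\mV^\T = \mA\mSigma\mB^\T$ with $\mSigma \succ 0$, the pair $\widehat\mU = \mA\mSigma^{1/2}$, $\widehat\mV = \mB\mSigma^{1/2}$ is balanced, since $\widehat\mU^\T\widehat\mU = \mSigma = \widehat\mV^\T\widehat\mV$, and it reproduces the same product. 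Because $\mU$ and $\widehat\mU$ share the same $r$-dimensional column space and both have full rank, there is an invertible $\mR$ with $\widehat\mU = \mU\mR$; matching the products and using the full rank of $\mU$ and $\mV$ then forces $\widehat\mV = \mV\mR^{-\T}$. Hence $(\widehat\mU,\widehat\mV)=\Phi_{\mR}(\mU,\mV)$ lies in the same orbit.

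Next I would transfer both the criticality and the curvature information through $\Phi_{\mR}$. Since $\Phi_{\mR}$ is a linear isomorphism of $\R^{n\times r}\times\R^{m\times r}$ and $h\circ\Phi_{\mR}=h$, differentiating yields $\nabla h(\vp) = \Phi_{\mR}^\T\,\nabla h(\Phi_{\mR}\vp)$ and $\nabla^2 h(\vp) = \Phi_{\mR}^\T\,\nabla^2 h(\Phi_{\mR}\vp)\,\Phi_{\mR}$. The first identity shows that $(\widehat\mU,\widehat\mV)$ is again a critical point (as $\Phi_{\mR}^\T$ is invertible), so it lies in $\calC$; the second exhibits the two Hessians as congruent via the invertible $\Phi_{\mR}$, whence Sylvester's law of inertia guarantees they have the same number of negative eigenvalues. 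Applying Lemma~\ref{lem:balanced case} to $(\widehat\mU,\widehat\mV)$, it is either a global minimum or a strict saddle. If it is a global minimum, then since $\Phi_{\mR}$ preserves the objective value, $(\mU,\mV)$ attains the same minimal value and is itself a global minimum; if it is a strict saddle, congruence preserves the negative eigenvalue and $(\mU,\mV)$ is likewise a strict saddle.

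The routine parts are the SVD balancing and the chain-rule identities. The step deserving the most care is the congruence/inertia argument: I must verify that $h\circ\Phi_{\mR}=h$ genuinely produces the congruence $\nabla^2 h(\vp)=\Phi_{\mR}^\T\nabla^2 h(\Phi_{\mR}\vp)\Phi_{\mR}$ with $\Phi_{\mR}$ invertible, so that Sylvester's law applies and the strict-saddle property (a strictly negative Hessian eigenvalue) is preserved in both directions between $(\mU,\mV)$ and its balanced representative.
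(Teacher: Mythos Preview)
Your proposal is correct and follows essentially the same approach as the paper. The paper constructs the balancing explicitly via matrices $\mD$ and $\mG$ satisfying $\mD\mG^{\T}=\mId$ (i.e., exactly your $\mR$ and $\mR^{-\T}$), and then computes the gradient and Hessian quadratic-form transfer by hand for the specific direction; your use of the $GL(r)$ invariance $h\circ\Phi_{\mR}=h$, the chain-rule identities, and Sylvester's law of inertia is a clean repackaging of the same argument.
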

\begin{proof}[Proof of \Cref{lem:non-degenerate case}] Suppose $(\mU,\mV)$ is not a global minimum of $h$.
Let $\mU\mV^\T = \mP\mSigma \mQ^\T$ be a reduced SVD of $\mU\mV^\T$. Since $\rank(\mU\mV^\T) =r$ and both $\mU$ and $\mV$ have only $r$ columns, we know $\rank(\mU) = \rank(\mV) = r$. Denote by $\mD = (\mU^\T\mU)^{-1}\mU^\T\mP\mSigma^{1/2}$ and $\mG = (\mV^\T\mV)^{-1}\mV^\T\mQ\mSigma^{1/2}$. With this, we have
\[
\mD\mG^\T = (\mU^\T\mU)^{-1}\mU^\T\mP\mSigma\mQ^\T\mV (\mV^\T\mV)^{-1} = \mId,
\]
and
\[
\widetilde\mU = \mU \mD = \mP\mSigma^{1/2}, \ \widetilde\mV = \mV \mG = \mQ\mSigma^{1/2}.
\]
The above constructed pair $(\widetilde\mU,\widetilde\mV)$ satisfies
\[
\widetilde \mU \widetilde \mV^\T = \mU\mV^\T, \ \widetilde \mU^\T \widetilde \mU = \widetilde \mV^\T \widetilde \mV.
\]
Since $(\mU,\mV)\in\calC$, we have
\[
\nabla h_{\mU}(\widetilde \mU, \widetilde \mV)= \nabla h_{\mU}(\mU,\mV)\mD = \vzero, \ \nabla h_{\mV}(\widetilde \mU, \widetilde \mV)= \nabla h_{\mV}(\mU,\mV)\mG = \vzero,
\]
which implies that $(\widetilde\mU,\widetilde\mV)$ is also a critical point (but not a global minimum since by assumption $(\mU,\mV)$ is not a global minimum) of $h$. Since $(\widetilde\mU,\widetilde\mV)$ is also balanced, it follows from \Cref{lem:balanced case} that there exists $\widetilde\mDelta_{\widetilde \mU}$ and $\widetilde\mDelta_{\widetilde \mV}$ such that
\[
[\nabla^2h(\widetilde\mU,\widetilde\mV)](\widetilde\mDelta,\widetilde\mDelta) <0.
\]
Now construct $\mDelta_{\mU} = \mDelta_{\widetilde \mU} \mD^{-1}$ and $\mDelta_{\mV} = \widetilde\mDelta_{\widetilde \mV} \mG^{-1}$. Then, we have
\begin{align*}
[\nabla^2h(\mU,\mV)](\mDelta,\mDelta) &= \|\mDelta_{\mU}\mV^\T + \mU \mDelta_{\mV}^\T\|_F^2 + 2\langle \mU\mV^\T - \mY, \mDelta_{\mU}\mDelta_{\mV}^\T \rangle \\
&= \|\widetilde\mDelta_{\widetilde\mU}\widetilde\mV^\T + \widetilde\mU \mDelta_{\widetilde\mV}^\T\|_F^2 + 2\langle \widetilde\mU\widetilde\mV^\T - \mY, \mDelta_{\widetilde\mU}\mDelta_{\widetilde\mV}^\T \rangle\\
& = [\nabla^2h(\widetilde\mU,\widetilde\mV)](\widetilde\mDelta,\widetilde\mDelta)<0,
\end{align*}
which implies that $(\mU,\mV)$ is a strict saddle.
\end{proof}

\Cref{lem:balanced case} together with \Cref{lem:non-degenerate case} implies that any pair $(\mU,\mV)\in\calC$ is either a global minimum of $h$ or a strict saddle.

\bibliographystyle{abbrv}
\bibliography{nonconvex}

\end{document}